\newtheorem{assumption}{Assumption}
\newtheorem{theorem}{Theorem}[section]
\newtheorem{lemma}[theorem]{Lemma}
\DeclareMathOperator*{\argmin}{arg\,min}
\newcommand{\Var}{\mathrm{Var}}
\newcommand{\email}[1]{\texttt{#1}}
\title{Novel Optimization Techniques for Parameter Estimation}
\author{
Chenyu Wu\thanks{Department of Industrial \& Systems Engineering, University of Minnesota
(\email{wu000766@umn.edu}).}
\and 
Nuozhou Wang\thanks{Department of Industrial \& Systems Engineering, University of Minnesota
(\email{wang9886@umn.edu}).} 
\and 
Casey Garner\thanks{School of Mathematics, University of Minnesota   (\email{garne214@umn.edu}).}
\and 
Kevin Leder\thanks{Department of Industrial \& Systems Engineering, University of Minnesota
(\email{lede0024@umn.edu}).} 
\and Shuzhong Zhang\thanks{Department of Industrial \& Systems Engineering, University of Minnesota
(\email{zhangs@umn.edu}).}
}
\begin{document}
\maketitle

\begin{abstract}
In this paper, we introduce a new optimization algorithm that is well suited for solving parameter estimation problems. 
We call our new method cubic regularized Newton with affine scaling (CRNAS). 
In contrast to so-called first-order methods which rely solely on the gradient of the objective function, our method utilizes the Hessian of the objective. 
As a result it is able to focus on 
points satisfying the {\it second-order}\/ optimality conditions, 
as opposed to first-order methods that simply converge to critical points. 
This is an important feature in parameter estimation problems where the objective function is often non-convex and as a result there can be many critical points making it is near impossible to identify the global minimum. 
An important feature of parameter estimation in mathematical models of biological systems is that the parameters are constrained by either physical constraints or prior knowledge. 
We use an affine scaling approach to handle a wide class of constraints. 
We establish that CRNAS identifies a point satisfying $\epsilon$-approximate second-order optimality conditions within $O(\epsilon^{-3/2})$ iterations. 
Finally, we compare CRNAS with MATLAB's optimization solver \textit{fmincon} on three different test problems. 
These test problems all feature mixtures of heterogeneous populations, a problem setting that CRNAS is particularly well-suited for. 
Our numerical simulations show CRNAS has favorable performance, performing comparable if not better than \textit{fmincon} in accuracy and computational cost for most of our examples. 
\end{abstract}

\section{Introduction}

    

    

Mathematical modeling plays a critical role in understanding and quantifying biological systems. In particular, it enables the quantification of dynamics, thus providing the ability to predict potential outcomes and guide future studies. An essential component of successful mathematical modeling is parameter estimation.
Here the goal is to train and/or fit a given mathematical model to observed data. 
The most common method of parameter estimation follows the maximum likelihood framework where one formulates a statistical model with associated parameters
and maximizes a likelihood function over the parameter space to fit the model to provided data.  
Thus, parameter estimation problems often reduce to optimization problems. 

A common feature of mathematical models arising in the study of biological systems is a high nonlinear dependence on the models' parameters. 
For example, the parameter sensitivities in a system of nonlinear differential equations also satisfy a nonlinear system of differential equations \cite{cao2003adjoint}. 
Even when one has an explicit formulation of their mathematical model there is often a nonlinear dependence on the model parameters, e.g., the logistic growth model, 
$y(t)=L/(1+e^{-k(t-t_0)})$, has a nonlinear dependence on the parameters $k$ and $t_0$.

Parameters in mathematical models for biological systems 
usually have 
a physical interpretation.
So, the values they can assume are often restricted based on our knowledge of the natural world. 
For example, 
non-negativity and upper-bound constraints are often desired if not necessary. 
This is in contrast to neural network models where the parameters are often unconstrained. 
The presence of constraints on the parameter space adds non-trivial difficulties to the derived parameter estimation optimization problem. 
For instance, for constrained problems one cannot simply apply gradient descent since the updates could take the parameters out of their feasible region. 

Another phenomena to consider when modeling biological systems is heterogeneous populations. 
For example, when studying how cells respond \textit{in vitro} to chemotherapy there can be a heterogeneous response \cite{daigeler2008heterogeneous}. 
To model this type of heterogeneity one often assumes there are a known number of distinct subpopulations $S$ present at unknown proportions $\{p_i\}_{i=1}^S$, each with their own distinct characteristics. 
The presence of these unknown proportions can introduce a new challenge to the parameter estimation problem, namely equality constraints. 
In particular, we know \textit{a priori} any set of unknown proportions satisfy the conditions: $\sum_{i=1}^Sp_i=1$ and $p_i > 0$ for $i=1,\hdots,S$.


Combining all of this together, we see parameter estimation problems for biological models utilizing a maximum likelihood framework reduce to nonlinear and non-convex constrained optimization problems
\cite{wu2024using,ljung2013convexity,moles2003parameter}. 
The loss of convexity is an immense challenge. Convexity in optimization enables us to infer global properties from local properties, i.e., local minimums are also global minimums.  
This inferring up from local to global properties is lost with the loss of convexity. Global optimal solutions are preferred, of course, and this is no different in parameter estimation. 
Multiple global optimization frameworks have been suggested to address this issue \cite{schmiester2021efficient,gabor2015robust}. 
These frameworks try to propose a strategy for locating global optimum, while relying on solutions provided by local optimization algorithms which converge to locally optimal solutions.
A frequently employed technique is a multi-start approach; 
this technique suggests to solve the optimization problem starting from randomly selected initial points and obtaining multiple local solutions. 
Among these local solutions, one can argue the best local solution is a global solution if the problem has only a finite number of local solutions; 
however, confirming a finite number of local solutions becomes challenging when 
dealing with nonlinear and non-convex constrained continuous optimization models. 
In summary, current global optimization frameworks rely heavily on locating many local solutions to 
the constrained optimization problem, emphasizing the need for efficient and accurate local methods. 

Many of the likelihood functions utilized in the study of mathematical biology have accessible higher-order derivative information; however, numerous optimization algorithms do not take advantage of this; procedures such as gradient descent and quasi-Newton methods only use the gradient information of the objective function, which categorizes them as first-order methods.
For this reason, these approaches typically only yield solutions satisfying the first-order optimality conditions, which might not be locally optimal. 
In the parameter estimation problems we consider it is possible to compute the Hessian of the objective function and utilize this information to obtain better solutions. 
So,
in this work, we explore the advantages of optimization algorithms utilizing second-order information and develop a new algorithm. 
Our proposed method is called {\it cubic regularized Newton based on affine scaling} (CRNAS). 
This method combines the ideas of the cubic regularized Newton's method and affine scaling in-order to provide a novel second-order scheme capable of solving constrained optimization problems. 
An important advantage of CRNAS over first-order methods is it finds solutions satisfying the second-order optimality conditions. 
Due to the high levels of nonlinearity 
in mathematical biology models, there are possibly many solutions satisfying the first-order optimality conditions which are far from optimal;
therefore, bypassing these solutions is crucial and converging to second-order stationary points has been shown to produce globally optimal solutions for some non-convex models \cite{boumal2016non,ge2016matrix}.  
We provide a convergence analysis for CRNAS, obtaining the best possible iteration complexity bound for the class of models we study, 
and numerical experiments showcase a simple implementation of CRNAS is already competitive with MATLAB's state-of-the-art solvers.

The paper is structured as follows: Section \ref{sec: Methodology} develops CRNAS for a general class of constrained non-convex problems. 
We discuss the two methodologies grafted together to construct CRNAS, the cubic regularized Newton's method
and the affine scaling method,
and present the convergence theory; 
Section~\ref{sec: Benchmark algorithms} summarizes the state-of-the-art algorithms we compared CRNAS with in our numerical experiments and details the metrics used to compare their respective efficiencies; 
Sections~\ref{sec: Case study} and \ref{sec: case study: mixed logistic growth} present numerical experimentation with CRNAS using two case studies;
Section~\ref{sec: Case study} investigates a cancer drug response estimation problem while Section~\ref{sec: case study: mixed logistic growth} studies 
a general application in heterogeneous logistic growth estimation; 
the paper concludes in Section \ref{sec: Conclusion} with a discussion of the advantages and limitations of CRNAS and a proposed implementing scenario; 
additional appendices present the technical arguments of our analysis, exposition on CRNAS's implementation, and details about the set-up of our numerical experiments.

\section{Methodology}\label{sec: Methodology}
\subsection{Optimization Framework}
In a parameter estimation problem, one has a function that maps inputs and a parameter set to a desired output. 
Then, for a given dataset one is interested in finding the best fitting (in some sense) parameter set. 
How one defines `best fitting' is a choice that the modeler makes based on their understanding of the dataset and its generation. 
In particular, assume we have a function $f$ that takes inputs $x$ and parameter set $\theta$ and generates a prediction of a desired output, $y=f(x;\theta)$. If we have a dataset of paired inputs and outputs, $(x_i,y_i)_{i=1}^n$, and assume a simple statistical model generating the outputs, $y_i=f(x_i;\theta)+Z_i$, where $Z_i$ are independent and identically distributed random variables with probability density function $\phi$ that are used to represent potential observation noise,
we can then write the negative log-likelihood of the parameter set given the observed data as
\[
L\left(\theta|(x_i,y_i)_{i=1}^n\right)=-\sum_{i=1}^n\log \phi\left(y_i-f(x_i;\theta)\right).
\]
To complete the parameter estimation problem, we must minimize the function $L\left(\theta|(x_i,y_i)_{i=1}^n\right)$ over $\theta$; however, as mentioned before, the components of $\theta$ might have a physical interpretation and therefore be constrained. For example, they might be constrained to be non-negative, or bounded, or, as stated in the introduction, they might be proportions that sum to one. 
As a result, we have reduced the parameter estimation problem to a constrained optimization problem. 
Note that when writing the negative log-likelihood we will often drop the explicit dependence on the observed data.

Our focus in this work is creating algorithms to numerically solve a broad class of constrained optimization problems that arise from parameter estimation problems of the form
\begin{equation}\label{eq:basic_model} 
    \begin{array}{ll}
\min_{\theta \in \mathbb{R}^n} & L(\theta) \\
\text { s.t. } & A\theta=b,  l\leq \theta\leq u,
\end{array}
\end{equation}
where the objective function $L:\mathbb{R}^n \rightarrow \mathbb{R}$ is possibly non-convex and the domain is the intersection of a linear and box constraint.
Observe, all the examples given for constraints on the parameters can be represented by the conditions in \eqref{eq:basic_model}, 
and we can actually further generalize our optimization framework by noting the box constraint, $l\leq \theta \leq u$, 
can be rewritten as a conic constraint. Letting $\theta_1:=\theta-l$ and $\theta_2:=u-\theta$
we see the condition
$\theta_1,\;\theta_2\geq 0$ and $\theta_1+\theta_2=u-l$ is equivalent to the box constraint. 
So, we can replace the box constraint with an additional linear equality constraint and restrictions to the non-negative orient, i.e., $\mathbb{R}^n_+ := \{ x = (x_1, \hdots, x_n) \; |\: x_i \geq 0,\; i=1,\hdots, n\}$. 

The set $\mathbb{R}^n_+$ is an example of a pointed convex cone, that is, a set $\mathcal{K} \subseteq \mathbb{R}^n$ such that for all $x, y \in \mathcal{K}$ and $t \geq 0$: $tx \in \mathcal{K}$, $x+y \in \mathcal{K}$, and $-\mathcal{K} \cap \mathcal{K} = \{0\}$. If additionally the span of $\mathcal{K}$ is the entire ambient space we say the cone is solid.
Thus, due to this equivalence, we shall instead develop our algorithm to solve the more general class of minimization problems 
%
%
%
%
\begin{equation} \label{main:problem}
    \begin{array}{ll}
\min _{\theta \in \mathbb{R}^n} & L(\theta) \\
\text { s.t. } & A\theta=b, \theta\in \mathcal{K},
\end{array}
\end{equation}
where $\mathcal{K} \subseteq \mathbb{R}^n$ is a pointed convex solid cone and $L: \mathcal{K} \rightarrow \mathbb{R}$ is smooth but possibly non-convex. 
For a first-order method, one typically expects to find a first-order solution, which is defined as $\nabla L(\theta) = 0$ without constraints. 
However, like the examples in Sections \ref{sec: Case study} and \ref{sec: case study: mixed logistic growth}, functions may have many saddle points which satisfy first-order conditions while failing to be local minimums, and 
second-order methods are ideal to avoid converging to such saddle points. 
In the optimization literature, one of the major recent developments is the introduction of second-order methods which converge to second-order stationary points \cite{curtis2019inexact,o2021log,royer2020newton}, 
but relatively few papers in the literature deal with developing second-order methods for non-convex constrained models. 
Independent of our work, in a very recent arXiv paper, Dvurechensky and Staudigl \cite{dvurechensky2024barrier} proposed a similar approach to ours to solve non-convex constrained optimization models. 
The main difference between our work and theirs lies in the fact that we use the Dikin ellipsoid and a cubic regularization of the objective in defining our subproblems, while they use only a cubic regularization of the objective to form their subproblems.

In this paper, we use the definitions for first-order (FOSP) and second-order stationary points (SOSP) as introduced in \cite{he2023newton}. 
The authors of \cite{he2023newton} also introduced a so-called Newton-conjugate gradient (Newton-CG) based barrier method 
to find an $(\epsilon, \sqrt{\epsilon})$-SOSP for non-convex conically constrained models with $O(\epsilon^{-3/2})$  Cholesky factorizations and $\tilde{O}(\epsilon^{-3/2}\min\{n,\epsilon^{-1/4}\})$ other fundamental operations. 
In comparison, the authors of \cite{dvurechensky2024barrier} apply a cubic regularized model to get a $O(\epsilon^{-3/2})$ worst-case iteration bound.
CRNAS also obtains a $O(\epsilon^{-3/2})$ worst-case iteration bound to $\epsilon$-SOSP for \eqref{main:problem}; 
CRNAS is simple to implement and is well suited for the situation when the Hessian is easily obtainable. Additionally, our approach avoids the need to solve a possibly ill-conditioned Newton's equation; we only require the solution to a simple subproblem which we discuss in Appendix \ref{subproblem}. 
Next, we introduce the notion of self-concordant barrier functions for conic constraints, which play an important role in our analysis, and the corresponding notion of 
$\epsilon$-FOSP and $\epsilon$-SOSP arising from them.  

\subsection{Logarithmic Homogeneous Self-concordant Barrier Functions} \label{optimality}
Logarithmic homogeneous self-concordant barrier functions \cite{nesterov1994interior} play an important role in interior-point methods.
In this paper, we assume the cone $\mathcal{K}$ is convex pointed and solid and has a logarithmic homogeneous self-concordant barrier function $B : {\rm int}(\mathcal{K}) \to \mathbb{R}$, that is, $B$ is convex, three-times continuously differentiable over ${\rm int}(\mathcal{K})$, $B(\theta_k) \rightarrow \infty$ for all sequences $\{\theta_k\}_{k \in \mathbb{N}} \subseteq {\rm int}(\mathcal{K})$ which converge to a point on the boundary of $\mathcal{K}$, and for any $\theta\in {\rm int}(\mathcal{K})$, $\tau > 0$, and direction $h\in \mathcal{R}^n$, the following properties are satisfied:
\[|\nabla^3 B(\theta)[h,h,h]|\leq 2(\nabla^2 B(\theta)[h,h])^{3/2},\]
\[B(\tau\theta)=B(\theta)- D\ln{\tau},\]
where $D$ is a positive constant.
As an example, $B(\theta)=-\sum_{i=1}^n \log(\theta_i)$ is a logarithmic homogeneous self-concordant barrier function for the cone $\mathbb{R}^n_{+}$ with $D=1$. 

An important aspect of barrier functions comes from the fact they can define induced local norms.
These norms are standard in the conic optimization literature and play a crucial role in our algorithm development and analysis. 
Following convention,  we let
\begin{eqnarray}
    \|v\|_\theta&:=&\left(v^\top \nabla^2 B(\theta) v\right)^{1 / 2} \quad \nonumber \\ 
    \|v\|_\theta^*&:=&\left(v^\top \left[\nabla^2 B(\theta)\right]^{-1} v\right)^{1 / 2} \nonumber \\
    \|C\|_\theta^* &:=&\max _{\|v\|_\theta \leq 1}\|C v\|_\theta^* \nonumber 
\end{eqnarray}
for all $v \in \mathbb{R}^n$ and $C \in \mathbb{R}^{n \times n}$.
Note, unless otherwise stated, $\|\cdot\|$ denotes the regular Euclidean norm. 
Following the pattern of \cite{he2023newton}, a feasible solution $\hat \theta \in \mbox{\rm int}(\mathcal{K})$ with $A\hat \theta=b$ is said to be an $\epsilon$-approximate first-order stationary point ($\epsilon$-FOSP) 
for \eqref{main:problem} if 
\[
\mbox{\rm dist}\left((\nabla^2 B(\hat\theta))^{-1/2} \nabla L(\hat\theta), (\nabla^2 B(\hat\theta))^{-1/2}\mbox{\rm Range}(A^\top)\right) = O(\epsilon),
\]
which means that the angle between $(\nabla^2 B(\hat\theta))^{-1/2} \nabla L(\hat\theta)$ and the orthogonal complement subspace of $(\nabla^2 B(\hat\theta))^{-1/2}\mbox{\rm Range}(A^\top)$, namely $ (\nabla^2 B(\hat\theta))^{1/2}\mbox{\rm Null}(A)$, is of order $\epsilon$. The latter statement can be restated as in terms of the local norms as
\begin{equation} \label{KKT1}
|(\nabla L(\hat \theta))^\top  d|\leq \epsilon \|d\|_{\hat \theta}
\end{equation}
for all $Ad=0$. Moreover, a solution 
$\hat \theta$ is called an
$\epsilon$-approximate second-order stationary point ($\epsilon$-SOSP)
if in addition to being an $\epsilon$-FOSP the point $\hat\theta$ also satisfies
\[
d^\top \nabla^2 L(\hat\theta) d\geq  - \sqrt{\epsilon} 
\|d\|^2_{\hat\theta} 
\]
for all $A d = 0$; see Remark 1 (ii) of \cite{he2023newton} for further details. Next, we introduce the two methods which inspired our algorithm design. 


\subsection{Cubic Regularized Newton’s Method}
The cubic regularized Newton’s method was first proposed by Nesterov and Polyak to solve smooth unconstrained optimization problems \cite{nesterov2006cubic}. 
Their approach adds a cubic regularization term to the second-order Taylor expansion of the objective function about the current iterate, $\theta^k$, and solves this subproblem to compute the next iterate, i.e., 
\begin{equation}\label{eq:crn_subproblem}
\theta^{k+1} \in \arg\min_{\theta \in \mathbb{R}^n} \left( \left\langle\nabla L\left(\theta^k\right), \theta-\theta^k\right\rangle +\frac{1}{2}\nabla^2L\left(\theta^k\right)[\theta-\theta^k]^2+\frac{M}{6}\left\|\theta-\theta^k\right\|^3\right).
\end{equation}
If the objective function has a gradient Lipschitz Hessian with constant $L_H \geq 0$ and $M \geq L_H$, then the above subproblem amounts to minimizing a cubic upper bound of the objective function about $\theta^k$. Hence, under mild assumptions, cubic regularized Newton monotonically decreases the value of the objective function.  

This method boasts multiple desirable qualities. 
The algorithm converges globally to second-order stationary points and computes a point satisfying the approximate first-order unconstrained optimally conditions, i.e., $\|\nabla L(\theta)\|\leq \epsilon$, within $O(\epsilon^{-3/2})$ iterations, which bests gradient descent, and the method converges quadratically near strict local minimums \cite{nesterov2006cubic}. Furthermore, though \eqref{eq:crn_subproblem} is generally a non-convex problem, the subproblem is actually equivalent to minimizing a convex function in one variable. 
Thus, cubic regularized Newton is an implementable, globally convergent, and locally fast converging method to second-order stationary points.  
For these reasons, we seek to incorporate aspects of this procedure into our design.  

\subsection{Affine Scaling Method}
The affine scaling method \cite{lagarias1990ii,barnes1986variation,vanderbei1986modification} is an algorithm for solving linear programming problems by rescaling 
the variables 
and constraining the next iterate to lie within a ball contained inside the cone $\mathbb{R}^n_{+}$ to get a better iterate at a reduced computational cost. 
Consider the linear program
\[\begin{array}{ll}
\min _{\theta \in \mathbb{R}^n} & c^\top \theta \\
\text { s.t. } & A\theta=b, \theta\geq {0}.
\end{array}\]
In each iteration, we scale the problem based on the current point $\theta^k>0$. 
Let $\theta'=D_k^{-1}(\theta-\theta^k)=D_k^{-1}\theta-\textbf{1}_{n}$, where $D_k=\mbox{\rm Diag}(\theta^k)$ is a diagonal matrix with diagonal elements $\theta^k$ and $\textbf{1}_{n}$ is the vector of all ones in $\mathbb{R}^n$. 
Then, the non-negativity constraint is equivalent to $\theta'\geq -\textbf{1}_{n}$. 
We desire to have the next iterate stay inside the interior of $\mathbb{R}^n_+$, so we replace $\theta'\geq -\textbf{1}_{n}$ with a ball constraint, that is, 
we instead enforce $\|\theta'\|\leq 1-\alpha$, where $0<\alpha<1$. 
This then ensures the next iterate cannot lie on the boundary of the cone with the proximity to the boundary dictated by $\alpha$, i.e., $\alpha$ close to zero can yield new iterates near the boundary.  
With this new constraint and change-of-variable, we form 
the following subproblem
\begin{equation}\label{eq:affine_scaling_subprob}
\begin{array}{ll}
\min _{\theta \in \mathbb{R}^n} & (D_kc)^\top \theta' \\
\text { s.t. } & AD_k\theta'={0}, \|\theta'\|\leq 1-\alpha.
\end{array}
\end{equation}
The main benefit of \eqref{eq:affine_scaling_subprob} is it has a closed formed solution. 
So, the affine scaling method proceeds by forming and solving \eqref{eq:affine_scaling_subprob} and using the variable transformation to obtain the next iterate. 
Utilizing the notation introduced in Section~\ref{optimality}, we can then concisely write the affine scaling update as
\begin{equation}\label{eq:affine_iterate}
\begin{array}{llll}
\theta^{k+1}&=&\arg\min _{\theta \in 
          \mathbb{R}^n} & c^\top \theta \\
            & & \text { s.t. } & A\theta=b, \|\theta-\theta^k\|_{\theta^k}\leq 1-\alpha
\end{array}
\end{equation}
where $B(\theta) = -\sum_{i=1}^{n} \log(\theta_i)$ is the barrier function used to define the norm in the constraint.

\subsection{Cubic Regularized Newton with Affine Scaling (CRNAS)}
Our new method seeks to combine the ideas of cubic regularized Newton and affine scaling to develop a procedure which solves \eqref{main:problem}. 
The crux of our approach is to utilize the affine scaling method to handle the constraints, leveraging the concept of homogeneous self-concordant barrier functions to deal with the general conic constraints, but replace the linear objective in \eqref{eq:affine_iterate} with the local cubic approximation of the objective function in \eqref{eq:crn_subproblem}. 
Thus, our method generates new iterates by solving the following subproblem
    \begin{eqnarray}\label{eq:main_subproblem}
        \theta^{k+1}&=&\arg\min _{A\theta=b,\|\theta-\theta^k\|_{\theta^k}\leq 1-\alpha} \left(\left\langle\nabla L\left(\theta^k\right), \theta-\theta^k\right\rangle +\frac{1}{2}\nabla^2L\left(\theta^k\right)[\theta-\theta^k]^2+\frac{M}{6}\left\|\theta-\theta^k\right\|_{\theta^k}^3\right),
    \end{eqnarray}
where $M$ is a positive number. Since our method brings together these two different procedures we coined it cubic regularized Newton with affine scaling (CRNAS). 
A precise description of CRNAS is provided below. 
\vspace{0.05in}

\shadowbox{\begin{minipage}{6.4in}
{\bf Cubic Regularized Newton with Affine Scaling (CRNAS)}\label{alg}
\begin{description}
\item[Step 0:] Provide an interior point $\theta^0$, i.e., $A\theta^0=b$ and $\theta^0\in \mbox{\rm int}(\mathcal{K})$; choose the constants $\eta >0$, $M >0$, and $\alpha \in (0,1)$; set $k=0$

\item[Step 1:] Solve the following subproblem and proceed to Step 2: 
\begin{eqnarray}
        \theta^{k+1}&=&\arg \min _{\theta:A\theta=b,\|\theta-\theta^k\|_{\theta^k}\leq 1-\alpha} \left(\left\langle\nabla L\left(\theta^k\right), \theta-\theta^k\right\rangle +\frac{1}{2}\nabla^2L\left(\theta^k\right)[\theta-\theta^k]^2+\frac{M}{6}\left\|\theta-\theta^k\right\|_{\theta^k}^3\right)\nonumber
    \end{eqnarray}
\item[Step 2:] If $\|\theta^{k+1}-\theta^k\|_{\theta^k}<\eta$, let $K=k+1$ and stop. Otherwise, go back to Step 1 with $k=k+1$
\end{description}
\end{minipage}}
\vspace{0.01in}

Though \eqref{eq:main_subproblem} appears to be more difficult to solve than \eqref{eq:crn_subproblem}, this subproblem forming the backbone of CRNAS is solvable.
A theoretical analysis and practical approach to solving \eqref{eq:main_subproblem} is detailed in Appendix~\ref{subproblem}. 
In the next section, we provide an overview of the convergence guarantees we have for CRNAS; the technical proofs of the results are left to Appendix~\ref{complex}. 

\subsection{Overview of the Complexity Analysis of CRNAS} 

We first declare CRNAS is well-defined in the sense all of the iterates produced by the algorithm remain inside the cone $\mathcal{K}$.
The following lemma guarantees this follows from the constraints in the subproblem.
\begin{lemma}\label{lemma:feas} (Theorem 2.1.1, \cite{nesterov1994interior})
    Let $B$ be self-concordant on $\mathcal{K}$ and let $\theta_0\in  \mbox{\rm int}(\mathcal{K})$, then $\{\theta:\|\theta-\theta_0\|_{\theta_0}<1\}\subset  \mbox{\rm int}(\mathcal{K})$.
\end{lemma}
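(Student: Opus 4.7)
The plan is to establish this classical Dikin-ellipsoid statement by a one-dimensional continuation argument along the line segment emanating from $\theta_0$. Fix any direction $h$ with $\|h\|_{\theta_0} < 1$, parameterize $\theta(t) = \theta_0 + t h$ for $t \in [0,1]$, and let $T$ be the supremum of those $t^* \in [0,1]$ for which $\theta(s) \in \mbox{\rm int}(\mathcal{K})$ for all $s \in [0, t^*]$. The goal is to show $T = 1$; since $h$ was arbitrary subject to $\|h\|_{\theta_0} < 1$, this will give exactly the open unit Dikin ellipsoid containment claimed.

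Next I would restrict $B$ to this segment and study $\phi(t) := \nabla^2 B(\theta(t))[h,h] = \|h\|_{\theta(t)}^2$, noting that $\phi(0) = \|h\|_{\theta_0}^2 < 1$ and $\phi'(t) = \nabla^3 B(\theta(t))[h,h,h]$. Plugging this into the self-concordance hypothesis of the excerpt yields the differential inequality $|\phi'(t)| \leq 2 \phi(t)^{3/2}$, which I would rewrite in the integrable form $\left| \frac{d}{dt}\, \phi(t)^{-1/2} \right| \leq 1$. Integrating from $0$ then gives $\phi(t)^{-1/2} \geq \|h\|_{\theta_0}^{-1} - t$, and since $\|h\|_{\theta_0} < 1$ the right-hand side is strictly positive on the entire interval $[0,1]$; consequently $\phi$ remains uniformly bounded on $[0, \min(T,1))$.

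To finish, I would argue by contradiction. Suppose $T < 1$. Setting $\psi(t) := B(\theta(t))$, one has $\psi''(t) = \phi(t)$ on $[0, T)$, and the bound on $\phi$ from the previous paragraph, integrated twice, shows that $\psi$ stays bounded as $t \uparrow T$. On the other hand, by maximality of $T$ together with the closedness of $\mathcal{K}$, the limit point $\theta(T)$ must lie on $\partial\mathcal{K}$, so the barrier blow-up property built into the definition of $B$ forces $\psi(t) \to \infty$ as $t \uparrow T$, a contradiction. Hence $T = 1$ and $\theta_0 + h \in \mbox{\rm int}(\mathcal{K})$, as needed.

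The main obstacle, such as it is, is recognizing that self-concordance can be massaged into the clean integrable form for $\phi^{-1/2}$ rather than for $\phi$ itself; once that reciprocal-square-root trick is in hand, the rest is a routine continuation-and-contradiction argument exploiting the fact that a self-concordant barrier cannot remain finite up to the boundary of $\mathcal{K}$.
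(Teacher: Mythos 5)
The paper does not prove this lemma at all --- it is imported verbatim as Theorem 2.1.1 of \cite{nesterov1994interior} --- so there is no internal argument to compare against. Your continuation proof is the standard one for that theorem and is essentially sound: restricting to the ray, the self-concordance inequality $|\phi'(t)|\le 2\phi(t)^{3/2}$ does integrate to $\phi(t)^{-1/2}\ge \|h\|_{\theta_0}^{-1}-t$, which keeps $\nabla^2 B(\theta(t))[h,h]$ bounded strictly inside $t<\|h\|_{\theta_0}^{-1}$, and the boundedness of $B$ along the segment then contradicts the barrier blow-up that would have to occur if the segment first exited $\mbox{\rm int}(\mathcal{K})$ at some $T$ (by maximality $\theta(T)$ lies in $\partial\mathcal{K}$, since it is in the closure of $\mbox{\rm int}(\mathcal{K})$ but, were it interior, openness would let the segment continue). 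One cosmetic point: proving $T=1$ only places $\theta(s)$ in $\mbox{\rm int}(\mathcal{K})$ for $s\in[0,1)$, not at the endpoint $s=1$, which is the point $\theta_0+h$ you actually need. This is easily repaired: either note that your bound $\phi(t)\le(\|h\|_{\theta_0}^{-1}-t)^{-2}$ is finite on all of $[0,1]$ because $\|h\|_{\theta_0}<1$, so the same no-blow-up contradiction rules out $\theta(1)\in\partial\mathcal{K}$; or simply rescale, writing $\theta_0+h=\theta_0+t_0(h/t_0)$ for some $t_0\in(\|h\|_{\theta_0},1)$ so that the target point is an interior point of the segment for the rescaled direction. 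You should also record (or it should be implicit in the hypothesis that $\|\cdot\|_{\theta_0}$ is a norm) that $\nabla^2 B(\theta_0)\succ 0$, so $\phi(0)>0$ and the quantity $\phi^{-1/2}$ is well defined; the same differential inequality then keeps $\phi$ bounded away from zero along the segment. With those two small touches the argument is complete.
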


For our convergence theory to hold, we assume the following scaled Lipschitz smoothness of the second-order derivative of the objective: 
\begin{assumption}\label{lip} 
There exists a constant $\beta \geq 0$ such that 
for all $x, y \in \mbox{\rm int}(\mathcal{K})$
$$\|\nabla^2 L(y)-\nabla^2 L(x)\|_x^*\leq \beta \|y-x\|_x.$$
\end{assumption}
\noindent We note this assumption is not original to us but has precedence in the  
literature, e.g.~\cite{he2023newton}. 
So, under these limited conditions, we present
our main complexity theorem; the proof can be found in Appendix \ref{complex}.
\begin{theorem} \label{thm:iteration_complexity}
    If Assumption~\ref{lip} holds and CRNAS is applied to \eqref{main:problem} with
   %
    $M=2\beta$ and $\eta=\min\{1-\alpha,\epsilon^{-1/2}\alpha^{1/2}M^{-1/2},\frac{1}{\sqrt{2}}\epsilon^{-1/2}\alpha^{2}M^{-1}\}$, then the algorithm will stop within $K\leq 12(L(\theta^0)-L^*)\eta^{-3}+1$ iterations with $\theta^K$ an $\epsilon$-SOSP.
\end{theorem}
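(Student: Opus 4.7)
My plan is to follow the three-step blueprint familiar from cubic-regularized Newton analyses: prove a per-iteration cubic descent bound, telescope it to count the large steps, and show that a small terminal step certifies an $\epsilon$-SOSP. Lemma~\ref{lemma:feas} already guarantees each iterate lies in $\mbox{\rm int}(\mathcal{K})$ because the Dikin-ball constraint $\|\theta-\theta^k\|_{\theta^k}\leq 1-\alpha<1$ keeps every candidate strictly inside the cone, so the subproblem is always well posed.

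For the descent step I would apply Assumption~\ref{lip} inside an integral Taylor remainder: writing the difference between $L(y)$ and its second-order expansion at $\theta^k$ as $\int_0^1 (1-t)[\nabla^2 L(\theta^k+t(y-\theta^k))-\nabla^2 L(\theta^k)][y-\theta^k]^2\,dt$ and bounding the quadratic form via the duality $|\langle Cu,u\rangle|\leq\|C\|_{\theta^k}^*\|u\|_{\theta^k}^2$, Assumption~\ref{lip} yields
\[
\left|L(y)-L(\theta^k)-\langle\nabla L(\theta^k),y-\theta^k\rangle-\tfrac12\nabla^2 L(\theta^k)[y-\theta^k]^2\right|\leq \tfrac{\beta}{6}\|y-\theta^k\|_{\theta^k}^3.
\]
With $M=2\beta$ this makes the subproblem objective $\phi_k$ an upper model of $L(\cdot)-L(\theta^k)$ on the Dikin ball with slack of order $\|y-\theta^k\|_{\theta^k}^3$. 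Since $\theta^k$ is feasible for the subproblem with $\phi_k(\theta^k)=0$, optimality of $\theta^{k+1}$ together with a Nesterov--Polyak-style argument (testing the first-order condition of the model against $\theta^k-\theta^{k+1}$) sharpens this to a cubic decrease $L(\theta^{k+1})-L(\theta^k)\leq -c_1\|\theta^{k+1}-\theta^k\|_{\theta^k}^3$ for an explicit constant $c_1$ that I would track to recover the factor of $12$. Telescoping over the $K-1$ non-terminating iterations, each of which satisfies $\|\theta^{k+1}-\theta^k\|_{\theta^k}\geq\eta$ by the stopping rule, gives $(K-1)c_1\eta^3\leq L(\theta^0)-L^*$ and hence the claimed $K=O(\eta^{-3})$.

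The hard part is the third step: showing that the stopping criterion $\|\theta^K-\theta^{K-1}\|_{\theta^{K-1}}<\eta$ implies $\theta^K$ is an $\epsilon$-SOSP. Because $\eta\leq 1-\alpha$, the ball constraint in the terminal subproblem is inactive, so the subproblem's KKT system reads
\[
\nabla L(\theta^{K-1})+\nabla^2 L(\theta^{K-1})(\theta^K-\theta^{K-1})+\tfrac{M}{2}\|\theta^K-\theta^{K-1}\|_{\theta^{K-1}}\nabla^2 B(\theta^{K-1})(\theta^K-\theta^{K-1})\in \mbox{\rm Range}(A^\top),
\]
together with positivity of the corresponding second-order operator on $\{d:Ad=0\}$. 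For any such $d$ I would expand $\nabla L(\theta^K)^\top d$ via the mean-value theorem and substitute the first-order KKT condition; Assumption~\ref{lip} controls the residual by a multiple of $M\eta^2\|d\|_{\theta^{K-1}}$. Likewise, expressing $d^\top\nabla^2 L(\theta^K)d$ via the second-order KKT inequality leaves a residual of order $M\eta\|d\|_{\theta^{K-1}}^2$. The two nontrivial thresholds in the definition of $\eta$ are precisely the ones that force these residuals below $\epsilon\|d\|_{\theta^{K-1}}$ and $\sqrt{\epsilon}\|d\|_{\theta^{K-1}}^2$, respectively, with the $\alpha$-factors absorbing the constant on the $\nabla^2 B$ term. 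The final subtlety is that the $\epsilon$-SOSP definition uses the local norm at $\theta^K$ rather than at $\theta^{K-1}$; I would close this gap by the standard self-concordance estimate that neighbouring Dikin norms are equivalent up to a factor governed by $\|\theta^K-\theta^{K-1}\|_{\theta^{K-1}}<1-\alpha$, the $\alpha$ margin keeping the distortion bounded.
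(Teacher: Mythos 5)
Your proposal is correct and follows essentially the same route as the paper's own argument in Appendix~\ref{complex}: the cubic descent bound of Lemma~\ref{decrease} (your $c_1$ is exactly $M/12$, obtained already from $\phi_k(\theta^{k+1})\leq\phi_k(\theta^k)=0$ plus the Taylor bound, without needing the extra Nesterov--Polyak refinement you mention), the telescoping over steps of size at least $\eta$, the first- and second-order KKT analysis of the terminal inactive-constraint subproblem (Lemmas~\ref{lemma1} and~\ref{lemma2}), and the self-concordance equivalence $\alpha^2\nabla^2 B(\theta^K)\preceq\nabla^2 B(\theta^{K-1})\preceq\alpha^{-2}\nabla^2 B(\theta^K)$ to pass from the local norm at $\theta^{K-1}$ to that at $\theta^K$. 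The only difference is cosmetic: you derive the cubic Taylor remainder bound directly from Assumption~\ref{lip}, whereas the paper cites it from \cite{he2023newton} as Lemma~\ref{taylor}.
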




Therefore, in view of the discussions on the optimality conditions in Subsection~\ref{optimality}, Theorem~\ref{thm:iteration_complexity} implies that in at most $O(\epsilon^{-3/2})$ iterations CRNAS is guaranteed to find an $\epsilon$-SOSP. So, for the highly nonlinear and non-convex optimization problems 
coming from parameter estimation problems, CRNAS is able to avoid the many sub-optimal first-order stationary points leading to a proposed solution of often superior quality compared to first-order methods. 

It is worth remarking one can devise a first-order version of CRNAS which uses a quadratic approximation of the objective function in the subproblem rather than a cubic approximation. 
A similar convergence result can be derived for this procedure as well. 
As situations arise where second-order derivatives are prohibitively expensive to compute, this version of CRNAS would be prudent to implement; therefore, for the sake of completeness, we include a description of our first-order version of CRNAS with associated convergence theory in Appendix~\ref{first-complexity}
\section{Benchmarking Algorithms and Performance Metrics}
\label{sec: Benchmark algorithms}

In our case studies, we analyzed, evaluated, and compared the performance of CRNAS to state-of-the-art constrained nonlinear programming algorithms in MATLAB's optimization solver \textit{fmincon} \cite{MatlabOTB}. 
Specifically, we compared CRNAS with the \textit{interior-point} and \textit{sequential quadratic programming} algorithms.
Although \textit{fmincon} implements other approaches, our experience indicated these two methods performed the best in our experiments, so we only consider these in our exposition. 

Since CRNAS relies on precise gradient and Hessian information, we considered implementations of algorithms in \textit{fmincon} both with and without specifying the gradient and Hessian of the objective function.
We implemented both the \textit{interior-point} and \textit{sequential quadratic programming} algorithms in {\it fmincon} by supplying only the objective function and allowing MATLAB's built-in techniques to estimate the gradient and Hessian; IP and SQP refer to these implementations.
We also tested the {\it interior-point} solver with the exact gradient and Hessian; IP hess denotes this instantiation.
Lastly, the {\it sequential quadratic programming} method with the exact gradient was employed in our testing; SQP grad denotes this procedure.
Table~\ref{tab: Algorithm} summarizes these different approaches.

\begin{table}[ht]
    \centering
    \begin{tabular}{l|l|l}
    \hline
        {\bf Algorithm} & {\bf Gradient/Hessian} & {\bf Acronym} \\
    \hline
        Cubic regularized Newton with affine scaling & Yes & CRNAS\\
    \hline
        \textit{Interior-point} in \textit{fmincon} & No  & IP \\
    \hline
        \textit{Interior-point} in \textit{fmincon} & Yes & IP hess\\
    \hline
        \textit{Sequential quadratic programming} in \textit{fmincon} & No & SQP \\
    \hline
        \textit{Sequential quadratic programming} in \textit{fmincon} & Yes & SQP grad\\
    \hline
        
    \end{tabular}
    \caption{Summary of the algorithms tested in our numerical experiments}
    \label{tab: Algorithm}
\end{table}

Our studies sought to analyze the effectiveness and quality of the solutions provided by each algorithm. 
We evaluated three quantities in our assessment of each method: 
\begin{enumerate}
\item{\bf Total Compute Time} - the total wall time required for Algorithm $x$ to run from $N$ different initial points and terminate. Termination will occur if a local solution is obtained up to some provided tolerance or some computational resource or numerical limit has been expended or reached, e.g., maximum iterations or minimum stepsize, respectively

\item{\bf Best Computed Value} - is the lowest value of the objective function obtained from Algorithm $x$ from $N$ different initializations; this value is Algorithm $x$'s best estimate of the global minimum after $N$ different instantiations

\item{\bf Number of Iterations to Best Value} - is the number of iterations required to obtain the best computed value from the single initial point which generated it   
\end{enumerate}

The first two metrics seek to provide a measure of which method shall be of greatest benefit to a practitioner.
These metrics measure which algorithm is most effective at producing the best solution the quickest when a multi-start approach is applied to estimate a global minimum.
The third metric on the other hand provides an estimate of total iterations needed for the method to compute its best solution from a single initialization.
To ensure a fair comparison, in all of our tests each algorithm was provided the same initial guesses and had the same termination criteria. 
The termination criteria used in the numerical experiments are detailed in Appendix \ref{appx: stopping criteria}.
In the next two sections, we set-up parameter estimation problems arising in mathematical biology and compare CRNAS against {\it fmincon} using these three measures.


\section{Case Study I: Cancer Drug Response Estimation}\label{sec: Case study}
In this section, we explore the use of CRNAS to solve a recently proposed parameter estimation problem  \cite{kohn2023phenotypic,wu2024using}.
In these works, the authors employed a maximum likelihood estimation (MLE) approach to estimate tumor subpopulation dynamics and the corresponding drug response from high throughput drug screening data. 
The primary challenge in these studies arises from estimating the parameters of mixtures of dose-response Hill equations which are both nonlinear and non-convex. 
To alleviate some of the issues arising from nonlinearity, we perform a parameter transformation on the variables in the Hill equations.
Due to the importance and prevalence of Hill equations, we review them before describing the cancer drug response estimation problem.  

\subsection{Hill Equations and our Parameter Transformation}

The Hill equation is a fundamental model for describing dose-response relationships in pharmacological studies \cite{gesztelyi2012hill}. For a given drug dose level $d$, the Hill equation specifies the fraction of viable cells as
$$
    H(d;b,E,n) = b + \frac{1-b}{1 + (\frac{d}{E})^n},
$$
where the parameters $b$, $E$, and $n$ represent the maximum drug effect, the half maximal effect dose (EC50), and the Hill coefficient respectively. 
This model has been well utilized in pharmacology, biochemistry, and various other fields over the past hundred years \cite{hill1910possible}. 

An important question that arises in the study of the Hill equation is the accurate identification of model parameters based on observed data \cite{gadagkar2015computational}. 
A particularly challenging aspect of dealing with the parameters of the Hill equation is the nonlinear term in the denominator, $E^n$. 
To mitigate the challenge of this term, we apply a variable transformation.
In particular, we introduce a new parameter $\mathcal{E} = E^{n}$ to produce the modified Hill equation 
$$
    \Bar{H}(d;b,\mathcal{E},n) = b + \frac{1-b}{1+\frac{d^n}{\mathcal{E}}}.
$$
Given the transformation $\mathcal{E} = E^n$ is a one-to-one function when $E$ is non-negative, we can determine the value of $E$ and $n$ from the estimated values of $\mathcal{E}$ and $n$. 
This parameter transformation is applied during the numerical experiments, while the original EC50 parameter $E$ is used throughout the manuscript for clarity.

\subsection{Deterministic Drug-Affected Cell Proliferation Framework}\label{sec: Deterministic drug-affected framework}

\subsubsection{Problem Description}\label{sec: Deterministic HTS problem description}
In \cite{kohn2023phenotypic}, the authors proposed a novel statistical framework, PhenoPop, to analyze high-throughput drug screening data. 
PhenoPop takes into account the tumor drug response when multiple subpopulations, each with varying drug responses, exist within the tumor. 
A single Hill equation cannot capture this heterogeneous drug response; 
therefore, the PhenoPop model represents the dynamics of tumor growth as a composite of numerous subpopulations, with each subpopulation distinguished by a unique set of Hill parameters. 

PhenoPop employs a classical exponential growth model to describe the growth of each subpopulation. Each subpopulation has a distinct growth rate, denoted as $\alpha_i$. The size of subpopulation $i$ at time $t$ is represented as
\begin{equation*}
    \label{eq:exponential growth}
    X_i(t) = X_i(0) \exp(\alpha_i t),
\end{equation*}
where $X_i(0)$ is the initial population size of subpopulation $i$. 
The unique set of Hill parameters for subpopulation $i$ is denoted as $(b_i,E_i,n_i)$. The relationship between the cell growth rate and the drug dose is expressed as
\[\alpha_i(d; b_i,E_i,n_i) = \alpha_i + \log(H(d;b_i,E_i,n_i)),\]
where the constraint $b_i \in (0,1)$ is enforced to ensure that the drug decreases the growth rate. For $i\in \{1,\cdots, S\}$, the cell count of subpopulation $i$ at time $t$ and dose levels $d$ is modeled as
\begin{equation}
    \label{eq:subpopulation dynamic}
    f_i(t,d) = p_i X(0) \exp[t(\alpha_i + \log(H(d;b_i,E_i,n_i)))],
\end{equation}
where $p_i \in (0,1)$ satisfies $\sum_{i}p_i = 1$ and represents the initial proportion of subpopulation $i$, and $X(0)$ is the initial total cell count. While $X(0)$ is assumed to be a known quantity, PhenoPop estimates the following set of parameters from the data
\[\theta_{PP}(S) = \{p_i,\alpha_i,b_i,E_i,n_i; i\in \{1,\cdots,S\}\}.\]

To estimate these parameters, cell counts are collected at a set of time points $t\in \{t_1,\cdots,t_T\} = \mathcal{T}$ and drug dose levels $d\in \{d_1,\cdots,d_D\} = \mathcal{D}$; $R$ replications are performed for each possible time-dose combination. We denote the dataset as
\[\mathcal{X} = \{x_{t,d,r}; t\in \mathcal{T},d\in \mathcal{D},r \in \{1,\cdots,R\}\}.\]
PhenoPop then estimates the parameter set through the MLE process. 
In \cite{kohn2023phenotypic}, the authors assumed Gaussian noise with mean zero, featuring two possible variances based on the dose level and time of observation. 
For simplicity, we assume here the noise follows a mean-zero Gaussian distribution with a constant variance $\sigma^2$. 
The corresponding negative log-likelihood function for the observations $\mathcal{X}$ under the parameters set $\theta$ is given by
\begin{equation*}
    \label{eq: NLL PhenoPop}
    -\log(L(\theta,\sigma^2; \mathcal{X})) =  D R T \log\left(\frac{1}{\sqrt{2 \pi \sigma^2}}\right) + \frac{1}{2\sigma^2} \sum_{t\in \mathcal{T}} \sum_{d\in \mathcal{D}} \sum_{r = 1}^{R} (x_{t,d,r} - f(t,d;\theta))^2,
\end{equation*}
where $f(t,d;\theta) = \sum_{i = 1}^{S} f_i(t,d)$. To further simplify this formulation, we assume the observation noise $\sigma^2$ is known. So, the MLE problem is reduced to the following constrained least square problem
\begin{equation}
    \label{eq: PhenoPop Optimization}
    \hat{\theta} \in \argmin_{\theta\in \Theta} \sum_{t\in \mathcal{T}} \sum_{d\in \mathcal{D}} \sum_{r = 1}^{R} (x_{t,d,r} - f(t,d;\theta))^2,
\end{equation}
where $\hat{\theta}$ is an estimated parameter set and $\Theta$ is the feasible space of the parameters. In practice, the feasible space for the parameters is selected based on several biological assumptions. 

\subsubsection{Numerical Results}\label{sec: Deterministic HTS numerical results}
To evaluate the performance of each algorithm for solving \eqref{eq: PhenoPop Optimization}, we conducted a series of \textit{in silico} experiments. 
In each experiment, we generated 100 different datasets, each corresponding to a randomly selected true parameter set. 
For each dataset, we solved \eqref{eq: PhenoPop Optimization} starting from 20 different initial points.
Among those 20 results, we recorded the result with the best objective value as the solution obtained from each algorithm.
Details regarding the problem initialization and data simulation for these tests are located in Appendix \ref{appx: deterministic initialization}. 
It is important to note that in PhenoPop the primary source of randomness is the observation noise, which is less significant in this context; therefore, for the simulated data, we assume there is no observation noise, aiming to better illustrate each algorithms' performance. 
Thus, the optimal objective value should be close to zero; 
we consider values exceeding one indicative of poor estimation.
Our discussion of the experiments is divided into three sections based on the number of assumed subpopulations, $S$, in the model.
\\

\noindent\underline{Experiment with $S = 1$}\\

We begin with a preliminary experiment focusing on tumor dynamics involving a single dose-response curve. In this case, the parameter set consists of $\alpha,b,E,$ and $n$, without specification of the initial proportion parameter $p$. 
Figure \ref{fig: Deterministic S1} displays the three performance metrics discussed in Section~\ref{sec: Benchmark algorithms} for each of the five algorithms listed in Table~\ref{tab: Algorithm}. 
The box-plots display the results from 100 unique and independent datasets generated for the PhenoPop model. 

Evaluating each algorithm with our performance metrics, we see CRNAS demonstrated superior performance compared to the other algorithms in terms of the number of iterations required to achieve its best solution from one initial guess. 
Comparing total compute times, it is evident CRNAS required less time compared to IP and IP hess, while SQP and SQP grad required a similar amount of time to run for 20 different initial guesses; 
however, though SQP and SQP grad had comparable total compute times, they yielded less reliable solutions. 
CRNAS consistently delivered accurate estimates across all but one of the 100 instances of the model, as evidenced by its optimal obtained objective values consistently falling significantly below one in all but one test.
Both SQP and SQP grad on the other hand produced multiple poor estimations. 
Notably, without higher-order information, both SQP and IP could not reduce the objective value below $10^{-8}$. 
This demonstrates a handicap of utilizing the inexact higher-order information.
With exact higher-order information, both SQP grad and IP hess yielded optimal solutions lower than their inexact higher-order counterparts.
One might argue that since SQP grad has a comparable estimation quality to CRNAS, statistically speaking as evidenced by Figure~\ref{fig: Deterministic S1}, there is limited advantage to using CRNAS; however, this would be fallacious given the fact in about ten percent of the tests SQP grad produced poor estimates. 
IP hess produced accurate estimates, but required more time to obtain these solutions compared to CRNAS.
Overall, we see CRNAS performs on pair if not better than MATLAB's solvers based on our metrics. 
\\

\begin{figure}[H]
    \centering
    \includegraphics[width = \textwidth]{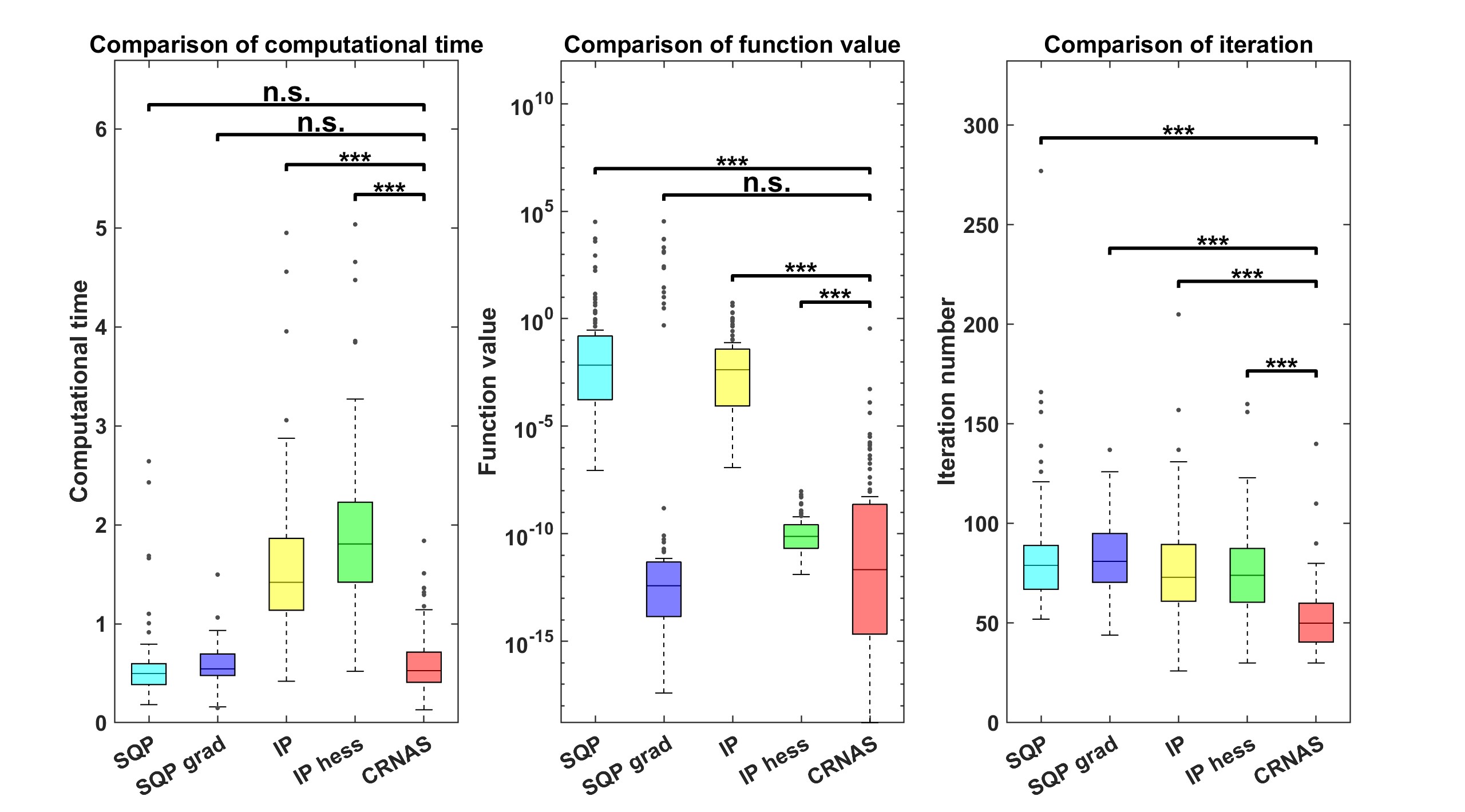}
    \caption{Comparison of the algorithms in Table \ref{tab: Algorithm} for solving \eqref{eq: PhenoPop Optimization} with a single subpopulation, i.e., $S = 1$. 
    The left panel records each algorithm's total compute time to solve each of the 100 instantiations of the model from 20 different initial points. 
    The middle panel records the best function value obtained by each algorithm from the 20 differential initial points for each of the 100 different experiments; the Y axis is in logarithmic scale. 
    The right panel records the number of iterations taken to obtain the optimal solution corresponding to the best objective values displayed in the middle panel. 
    Each boxplot indicates upper extremes, upper quantile, median, lower quantile, and lower extremes; the scattered dots represent outliers. 
    The significance bars indicate the p-values derived from the Wilcoxon rank sum test with significance levels such that $*** \leq 0.001 \leq ** \leq 0.01 \leq * \leq 0.05 \leq n.s.$.}
    \label{fig: Deterministic S1}
\end{figure}

\noindent\underline{Experiment with $S = 2$}\\

We now consider the case where two subpopulations exist in the tumor, each with a distinct drug response. 
The initial proportion parameter $p_i$ for each subpopulation is therefore reintroduced along with the corresponding equality constraint, $p_1 + p_2 = 1$; 
\begin{figure}[ht]
    \centering
    \includegraphics[width = \textwidth]{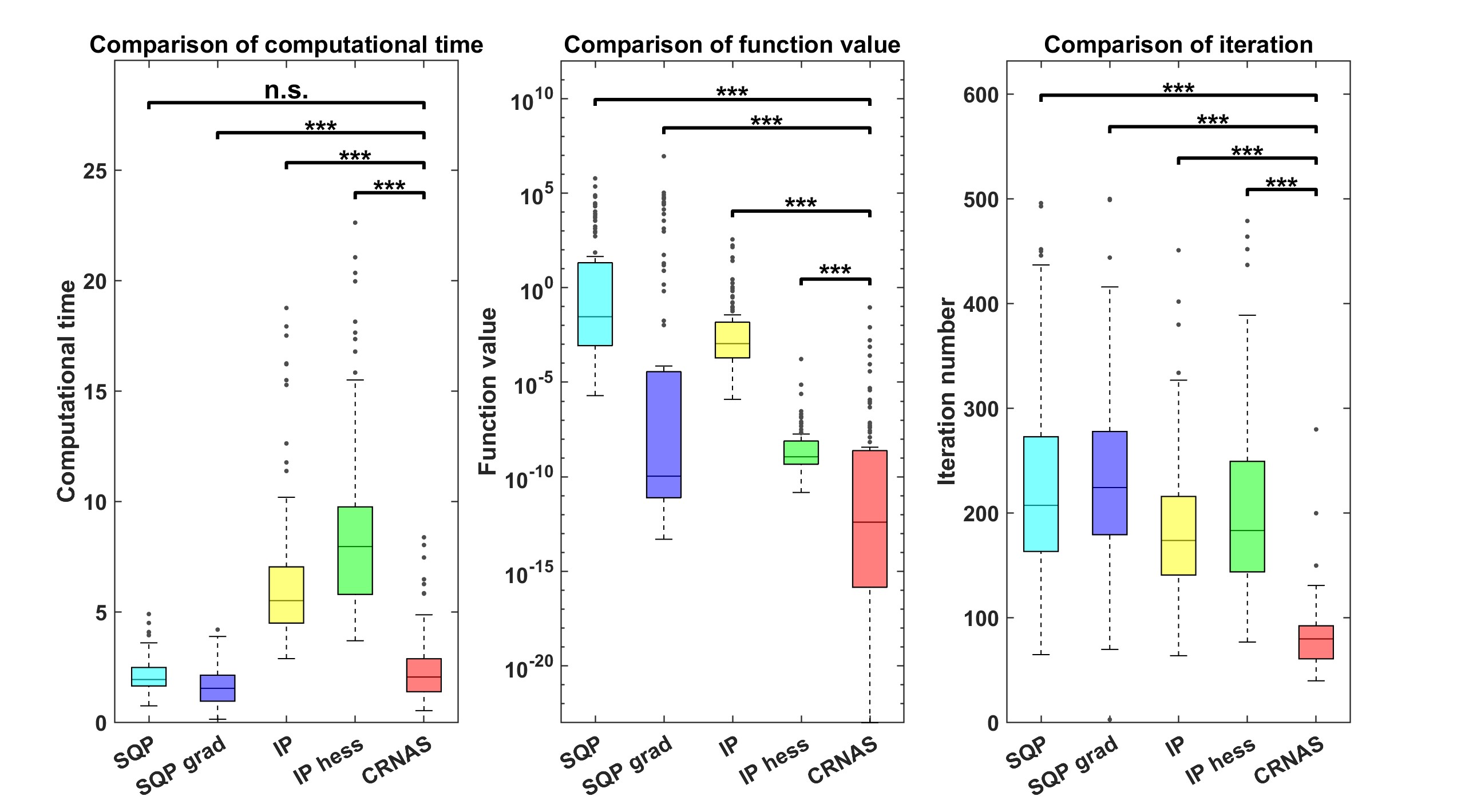}
    \caption{Comparison of the algorithms in Table \ref{tab: Algorithm} for solving \eqref{eq: PhenoPop Optimization} with $S = 2$ subpopulations; no upper bound constraint was present for $E_i$ and $n_i$ in these experiments. 
    The results are presented as before in Figure \ref{fig: Deterministic S1}.}
    \label{fig: Deterministic S2 uc}
\end{figure}
Figure \ref{fig: Deterministic S2 uc} provides the results for these experiments.

With two subpopulations present CRNAS maintained its advantage in terms of the number of iterations needed to obtain the optimal solution compared to the other algorithms. 
Adding an additional subpopulation increased the overall complexity of the model and this added complexity greatly limited SQP and SQP grad's ability to obtain accurate solutions. 
In more than 1/4 of the tests SQP's optimal computed objective values exceeded $1$, while more than half of the optimal solutions obtained by SQP grad were worse than the median of the optimal objective values obtained by CRNAS.
In contrast, CRNAS provided accurate estimations across all experiments for this more challenging problem; 
the performance of IP and IP hess was similar to the prior experiment. 

In seeking to understand the factors which limited the performance of SQP, we found imposing an upper bound constraint on some of the parameters 
improved the performance of SQP.
This technique requires researchers to heuristically determine an upper bound for all parameters. 
If the true parameter set is located within the narrowed zone, this significantly restricts the search region for the optimization algorithm and aids its location of better estimates.
The risk of employing this practice depends on the accuracy of the practitioner's prior knowledge since placing too small of an upper bound could make finding the true global optimal impossible as it will be outside the artificially constructed feasible region.
For the \textit{in silico} experiments, we have full knowledge about the true parameter set $\Theta^*(S)$, leading to an accurate optimization feasible region selection. 
Specifically, we set the optimal feasible region for $E_i$, and $n_i$ to be the interval $(0,100)$. The constraints on the other parameters remained the same. 

\begin{figure}
    \centering
    \includegraphics[width = \textwidth]{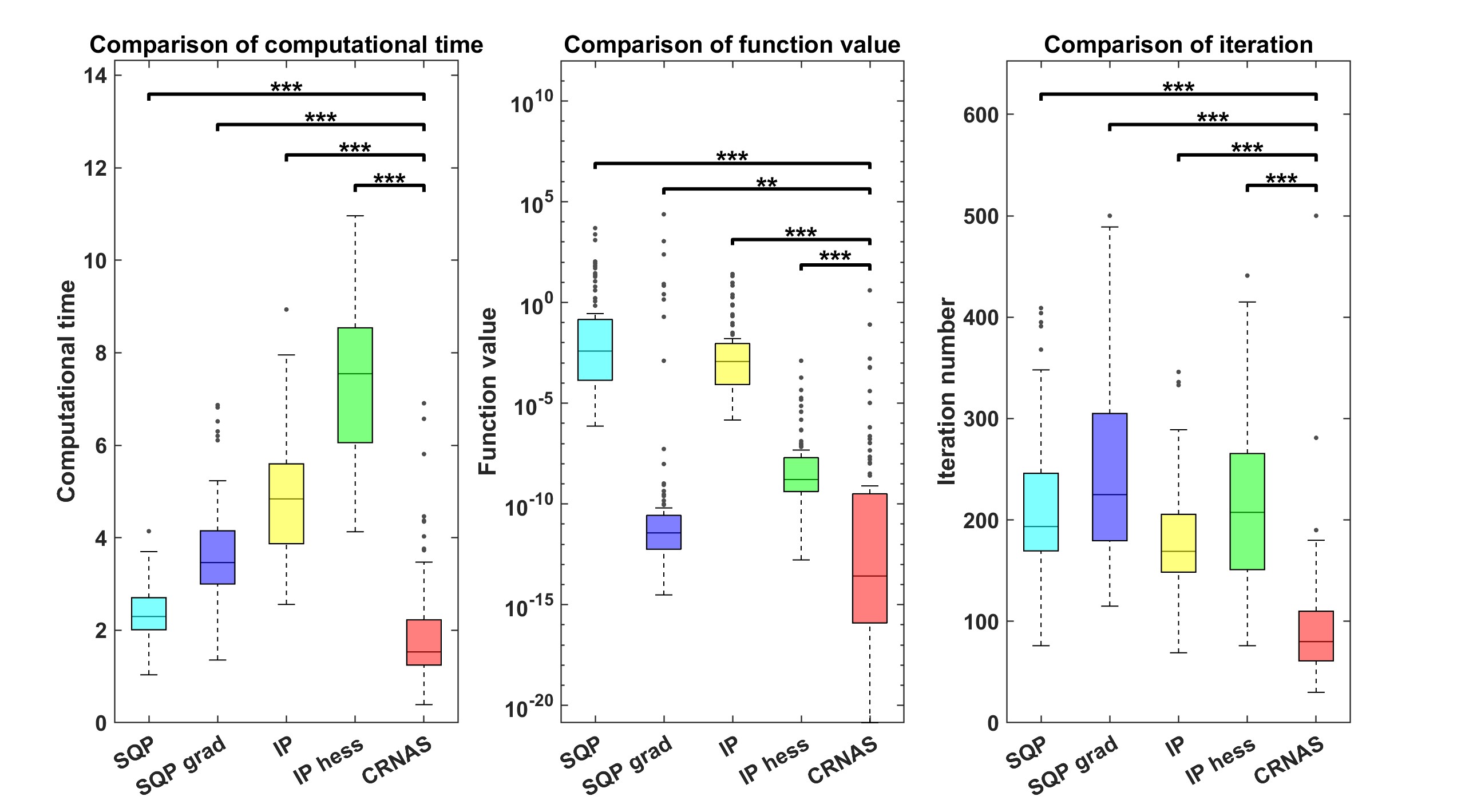}
    \caption{Comparison of the algorithms in Table \ref{tab: Algorithm} for solving \eqref{eq: PhenoPop Optimization} with $S = 2$ subpopulations. The `box constraint' is applied to each parameter. The results are presented as in Figure \ref{fig: Deterministic S1}.}
    \label{fig: Deterministic S2 wc}
\end{figure}

Using the new upper bound constraints on $E_i$ and $n_i$ another set of experiments was conducted; 
the results are provided in 
Figure \ref{fig: Deterministic S2 wc}. 
With these constraints, SQP grad performed in a similar fashion to the experiments with one subpopulation;
however, it required more total compute time compared to the outcomes depicted in Figure \ref{fig: Deterministic S2 uc}. 
Consequently, CRNAS significantly outperformed the other methods in both total compute time and number of iterations.
Additionally, CRNAS can provide robust estimates even without specifying upper bounds for all parameters, as shown in Figure \ref{fig: Deterministic S2 uc}.
So, a practitioner does not need to estimate potentially unknown upper bounds on parameters for their models in order for CRNAS to obtain strong parameter estimates.
\\

\noindent\underline{Experiment with $S > 2$}\\

\label{sec: experiment with S>2}

Lastly, we examined two more cases with $S = 3$ and $S = 5$. 
The majority of the conditions for generating the data remain unchanged from the experiments with $S = 2$; 
however, to accommodate setting $S = 3$ and $S = 5$, we employed a new scheme to select the EC50 values for each subpopulation and the dose levels $\mathcal{D}$ to ensure the identifiability of each parameter set; for details see Appendix \ref{appx: stochastic initialization}.

\begin{figure}[ht]
    \centering
    \includegraphics[width=\linewidth]{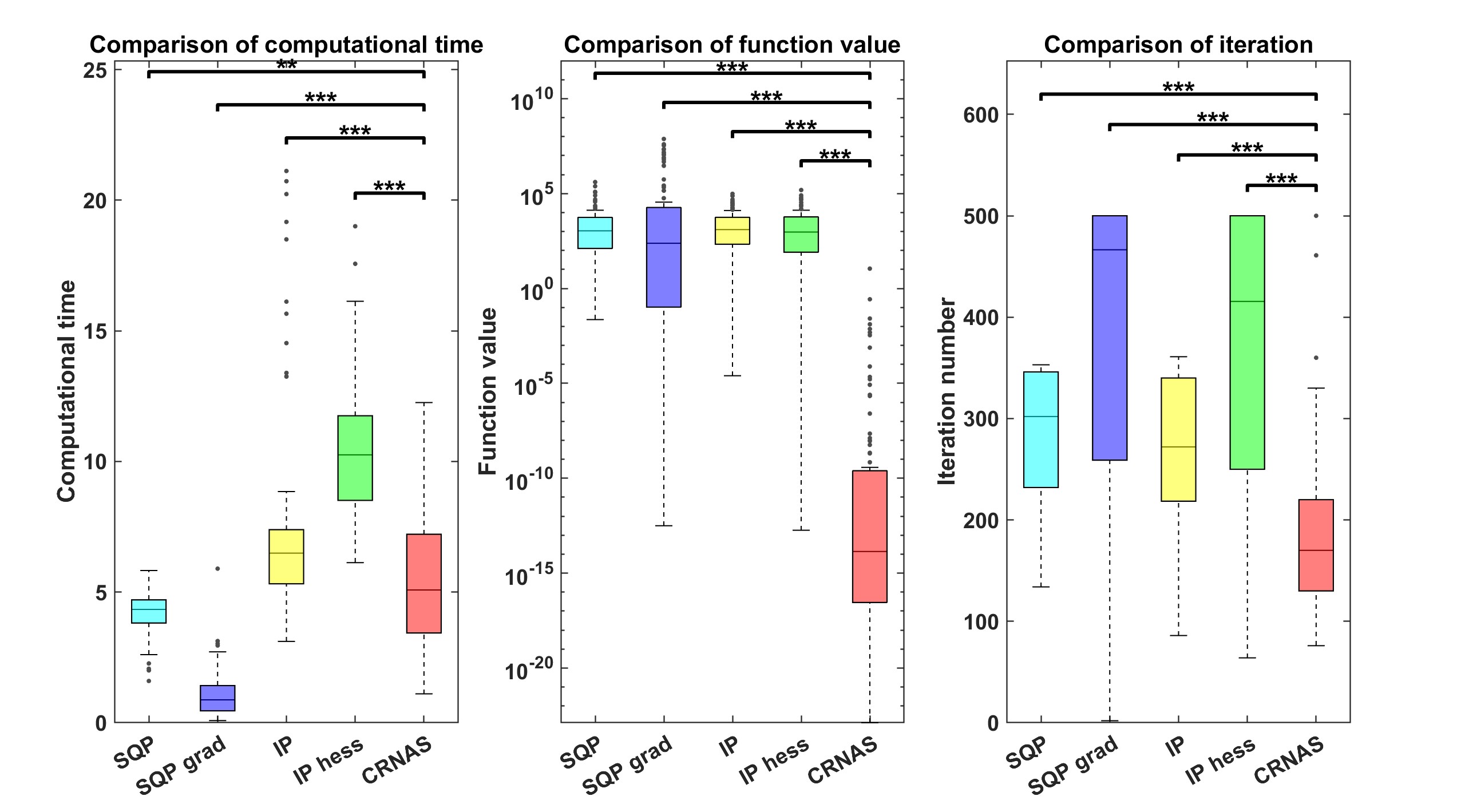}
    \caption{Comparison of the algorithms in Table \ref{tab: Algorithm} for solving \eqref{eq: PhenoPop Optimization} with $S = 3$ subpopulations. The results are presented as in Figure \ref{fig: Deterministic S1}.}
    \label{fig: PhenoPop UC experiment S3}
\end{figure}

\begin{figure}[ht]
    \centering
    \includegraphics[width=\linewidth]{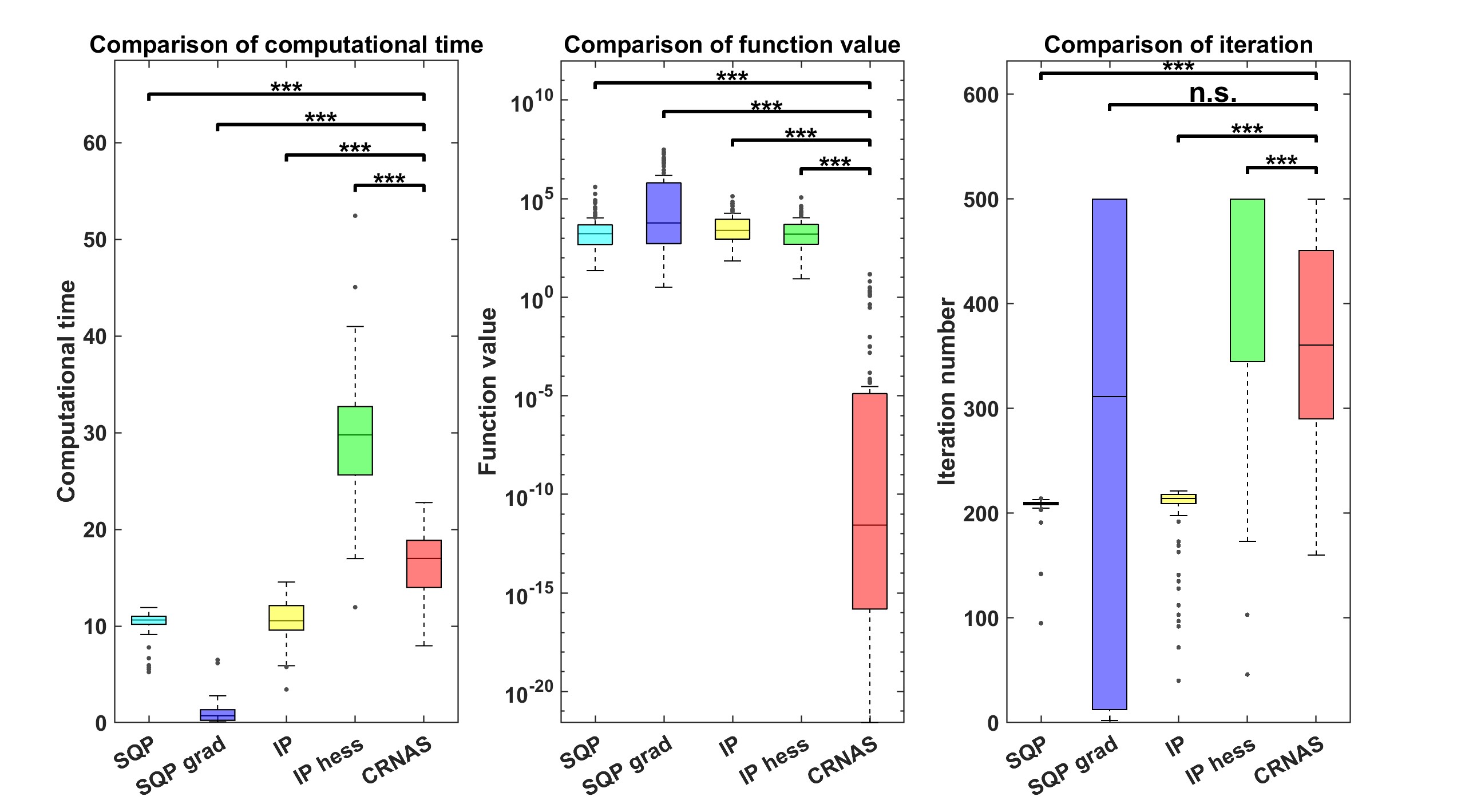}
    \caption{Comparison of the algorithms in Table \ref{tab: Algorithm} for solving \eqref{eq: PhenoPop Optimization} with $S = 5$ subpopulations. The results are presented as in Figure \ref{fig: Deterministic S1}.}
    \label{fig: PhenoPop UC experiment S5}
\end{figure}

The results for $S = 3$ and $S = 5$ are shown in Figures \ref{fig: PhenoPop UC experiment S3} and \ref{fig: PhenoPop UC experiment S5} respectively. 
When $S = 3$, it is evident that IP hess, which in the prior experiments produced very accurate solutions, 
located almost zero accurate estimates with nearly all its computed optimal objective values being larger than one. 
Some of the poor estimates produced by IP hess and SQP grad might be due to the algorithms reaching the maximum iteration limit of 500, see Appendix \ref{appx: stopping criteria} for details on termination criteria, but the algorithms produced poor estimates even when the iteration limit was not reached; as supported by the fact in more than half of the experiments with three subpopulations SQP grad and IP hess terminated before the maximum number of iterations was reached. 
CRNAS however consistently provided accurate estimates for both three and five subpopulations. 
We propose that this advantage stems from the inherent capability of CRNAS to automatically avoid saddle points and converge to second-order stationary points, avoiding suboptimal first-order stationary points which the other methods are potentially converging to. 

\subsection{Stochastic Drug-Affected Cell Proliferation Framework}\label{sec: Stochastic drug-affected framework}
\subsubsection{Problem Description}\label{sec: Stochastic drug-affected framework modeling}
To more accurately represent the changing variability in the data, \cite{wu2024using} extended the PhenoPop framework using linear birth-death processes to model the heterogeneous tumor cell populations. 
Instead of using a deterministic exponential growth model to represent the cell proliferation dynamics, they utilized the stochastic linear birth-death process. 
As a result, the relationship between the variance and the number of cells in the linear birth-death process can easily account for changing noise levels in the observations. 
Specifically, the authors assumed a cell in subpopulation $i$ (type-i cell) divides into two cells at rate $\beta_i \geq 0$ and dies at rate $\nu_i \geq 0$ stochastically. This means that during a short time interval $\Delta t >0$, a type-$i$ cell divides with probability $\beta_i \Delta t$ and dies with probability $\nu_i \Delta t$. The drug effect on type-$i$ cells is then modeled as a cytotoxic effect:
\[\nu_i(d) = \nu_i - \log(H(d;b_i,E_i,n_i)).\]
Note that this framework can also easily account for cytostatic effect. The authors denoted the stochastic process $X_i(t,d)$ as the number of cells in subpopulation $i$ at time $t$ under drug dose $d$ with mean and variance 
\begin{align*}
    \mathbb{E}[X_i(t,d)] &:= X(0)p_i \mu_i(t,d) = X(0) p_i \exp(t (\beta_i - \nu_i(d)))
\end{align*}
and
\begin{align*}
    \Var[X_i(t,d)] &:= X(0) p_i \sigma_i^2(t,d) = X(0)p_i \frac{\beta_i +\nu_i(d)}{\beta_i - \nu_i(d)} \left(\exp(2t(\beta_i-\nu_i(d))) - \exp(t(\beta_i-\nu_i(d))) \right).
\end{align*}
Since each subpopulation was assumed to grow independently, the total cell count $X(t,d) = \sum_{i = 1}^{S} X_i(t,d)$ is normally distributed with mean and variance
\begin{align*}
    \mathbb{E}[X(t,d)] &:= \mu(t,d) = \sum_{i = 1}^{S} X(0) p_i \mu_i(t,d)\\
    \Var[X(t,d)] &:=\sigma^2(t,d) = \sum_{i = 1}^{S} X(0) p_i \sigma_i^2(t,d),
\end{align*}
when the initial population $X(0)$ is large. 
Consequentially, the authors suggested the statistical framework
\begin{equation*}
    x_{t,d,r} = X^{(r)}(t,d) + Z_{t,d,r},
\end{equation*}
where $X^{(r)}(t,d)$ are independent and identical distributed (i.i.d.) copies of $X(t,d)$ for $r = 1,\cdots,R$, and $\{Z_{t,d,r}; d\in \mathcal{D},t\in \mathcal{T},r\in \{1,\cdots,R\}\}$ are i.i.d. normally distributed observation noise with mean $0$ and variance $c^2$. Thus, the model parameter set is now
\[\theta_{LBD}(S) = \{p_i,\beta_i,\nu_i,b_i,E_i,n_i,c;i\in \{1,\cdots,S\}\}.\]

This novel statistical framework aims to utilize the information in the variability of the data to predict the parameters $p_i,\beta_i,\nu_i,b_i,E_i$, and $n_i$ related to each subpopulation. As a result, there is a more complicated negative log-likelihood for the MLE process:
\begin{equation}
    \label{eq: NLL LBD}
    -\log(L(\theta_{LBD}(S);\mathcal{X})) = \sum_{t,d\in \mathcal{T},\mathcal{D}} R\log\left(\frac{1}{\sqrt{2\pi (\sigma^2(t,d)+c^2)}} \right) + \sum_{t,d\in \mathcal{T},\mathcal{D}} \sum_{r = 1}^{R} \frac{(x_{t,d,r} - \mu(t,d))^2}{2(\sigma^2(t,d) + c^2)}.
\end{equation}
The challenge in minimizing the negative log-likelihood function in \eqref{eq: NLL LBD} is its variance term. The variance now depends on the cell growth dynamics of each subpopulation which involves the Hill equation.   

\subsubsection{Numerical Results}
We next investigate CRNAS's performance on the MLE problem described by \eqref{eq: NLL LBD}. Details for the MLE problem initialization and data simulation are provided in Appendix \ref{appx: stochastic initialization}. 
In the stochastic model, verifying whether the solution reaches the global optimum is challenging, particularly when the parameter space is infinite. 
To address this, we use the likelihood of the true parameter set $\theta^*_{LBD}(S)$ as a targeted likelihood value. 
Specifically, we consider a solution to be good if its likelihood exceeds that of the true parameter set. 
In other words, a good solution should have a lower objective value (negative log-likelihood) than that of the true parameter set.
Since the likelihood of the true parameter $\theta^*_{LBD}(S)$ varies across different experiments, 
in order to create a unified metric across experiments we employed the relative likelihood for an estimator $\hat{\theta}_{LBD}(2)$ defined as
\begin{equation}
    \label{eq: relative likelihood}
    RL(\hat{\theta}_{LBD}(S); \theta^*_{LBD}(S)) := \frac{-\log(L(\hat{\theta}_{LBD}(S);\mathcal{X}(\theta^*_{LBD}(S)))}{-\log(L(\theta^*_{LBD}(S);\mathcal{X}(\theta^*_{LBD}(S)))},
\end{equation}
where the likelihood function $L(\cdot;\mathcal{X})$ is defined as in \eqref{eq: NLL LBD} and $\mathcal{X}(\theta^*_{LBD}(S))$ is the dataset generated by the true parameter set $\theta^*_{LBD}(S)$; typically this ratio is below $1$ for quality estimators.

As discussed in Section \ref{sec: Stochastic drug-affected framework modeling}, the advantage and novelty of the linear birth-death process framework is its variance structure can naturally explain the dynamic variance observed in real-world data. 
With mild assumptions on the cell growth dynamics, the linear birth-death process framework is able to utilize the information in the variance of the data to obtain more robust estimates of the model's parameters. 
The trade-off however is that employing this variance structure further complicates the likelihood function, as the variance term also contains a Hill function. 
For simplicity, we consider the setting with $S = 2$ subpopulations. 
In Section \ref{sec: experiment with S>2}, we concluded that inexact gradient and Hessian computations may mislead IP and SQP to terminate at poor solutions. 
We thus focus on comparing IP hess, SQP grad, and CRNAS in these experiments; Figure~\ref{fig: Linear birth-death UC20} displays the results of the experiments.

\begin{figure}
    \centering
    \includegraphics[width = \textwidth]{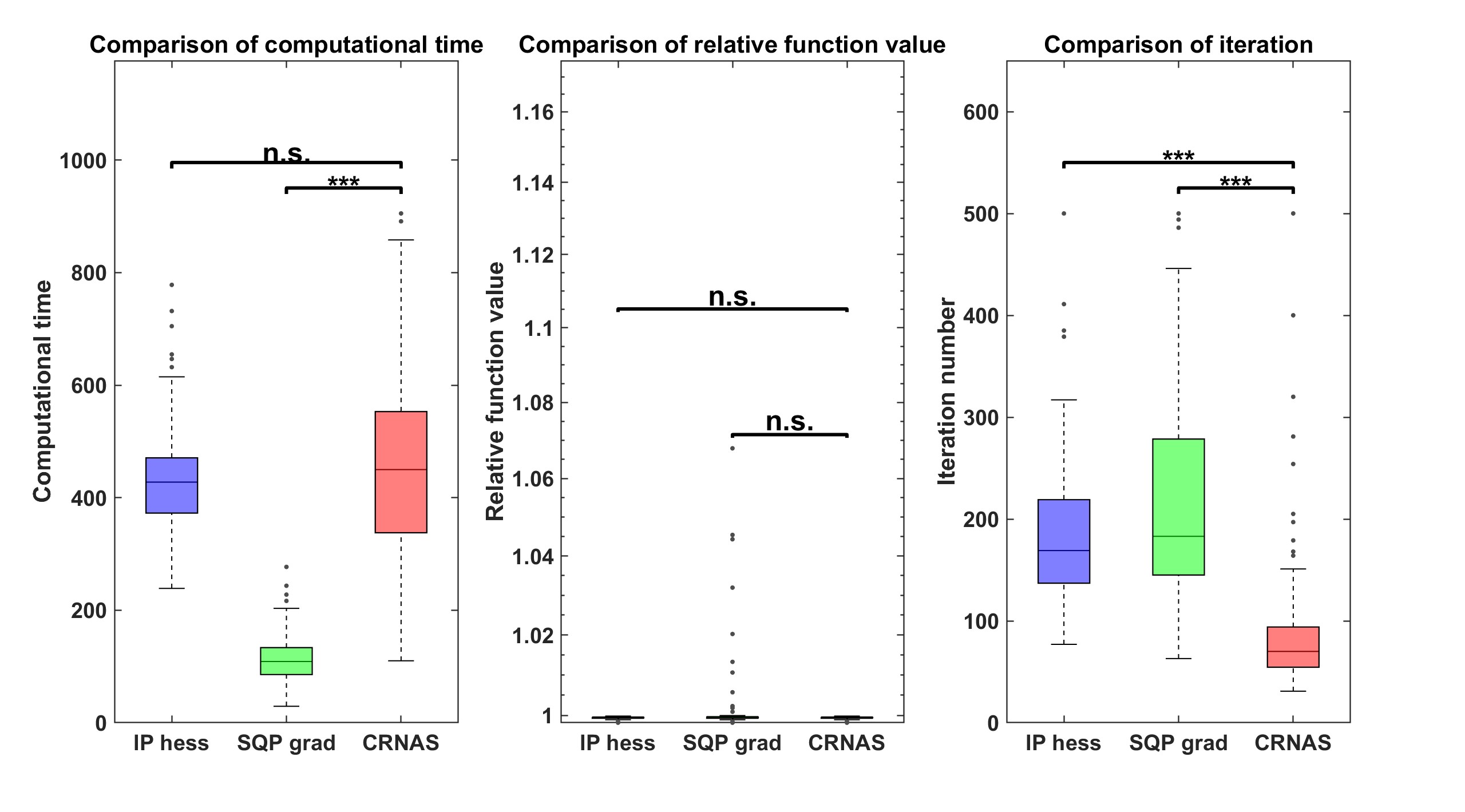}
    \caption{Comparison of three optimization algorithms in solving the MLE problem in \eqref{eq: NLL LBD} with $S = 2$ subpopulations. The results are presented as in Figure \ref{fig: Deterministic S1}.}
    \label{fig: Linear birth-death UC20}
\end{figure}

From the results, we note CRNAS still has the best convergence rate in terms of number of iterations; 
however, the total compute time of CRNAS was comparable to IP hess and longer than SQP grad. 
This is primarily because, for this problem, evaluating the Hessian matrix is significantly more expensive to compute than evaluating the gradient. 
Though SQP grad solved relatively quickly, many of the solutions were of low quality. 
Specifically, 14 of the 100 solutions obtained by SQP grad had a relative likelihood greater than 1, while all of the estimations obtained from IP hess and CRNAS had a relative likelihood less than 1. To illustrate this, we present an example estimation in Table \ref{tab: stochastic illustrative example}. 
In this example, it is evident SQP grad fails to estimate the parameters for the second subpopulation accurately, as the estimations for $E_2$ and $n_2$ are completely incorrect. 
This example reiterates the necessity of including the exact computation of higher-order information for better-quality solutions. 
Additionally, it supports our claim that when the relative likelihood is below $1$, the estimation quality is high.

\begin{table}[ht]
    \centering
    \small
    \begin{tabular}{|c|c|c|c|c|c|c|c|c|c|c|c|c|c|}
    \hline
         & $p_1$ & $\beta_1$ & $\nu_1$ & $b_1$ & $E_1$ & $n_1$ & $p_2$ & $\beta_2$ & $\nu_2$ & $b_2$ & $E_2$ & $n_2$  & $RL$ \\
        \hline
    $\theta^*(2)$ & 0.36 & 0.41 &  0.31 & 0.8 & 0.08 &  2.05 & 0.64 & 0.72 & 0.67 & 0.95 & 0.81 & 4.39  & 1\\
        \hline
    IP hess $\hat{\theta}(2)$  &  0.35 & 0.42 & 0.33 & 0.79 &  0.09 & 1.96 & 0.65 & 0.71 & 0.67 & 0.95 & 0.83 & 4.42  & 0.9997   \\
         \hline
    SQP grad $\hat{\theta}(2)$ & 0.38 & 0.43 & 0.34 & 0.8 &  0.09 & 1.82 & 0.62 & 0.73 & 0.68 & 0.95 & \textbf{1.19} & \textbf{36.51}  & 1.0009\\
        \hline
    CRNAS $\hat{\theta}(2)$ & 0.35 & 0.42 & 0.33 & 0.79 &  0.09 & 1.96 & 0.65 & 0.71 & 0.67 & 0.95 & 0.83 & 4.42  & 0.9997  \\
    \hline
    \end{tabular}
    \caption{An illustrative example of the results in Figure \ref{fig: Linear birth-death UC20}. The dataset was generated based on the true parameter set $\theta^*(2)$ and the estimations of IP hess, SQP grad, and CRNAS are shown. SQP grad incorrectly estimated both $E_2$ and $n_2$; these estimates are bolded. $RL$ in the last column is the relative likelihood defined in \eqref{eq: relative likelihood}.}
    \label{tab: stochastic illustrative example}
\end{table}


\section{Case Study II: Heterogeneous Logistic Estimation}\label{sec: case study: mixed logistic growth}
\subsection{Problem Description}
The logistic growth model is popular for studying population growth with a carrying capacity. 
Recently, there has been significant interest in an extended version of the logistic growth model, which aims to represent cell populations consisting of heterogeneous subpopulations  \cite{jin2018extended}. 
These subpopulations can exhibit varying growth dynamics, such as differing growth rates and carrying capacities.  
For simplicity here we study a linear combination of logistic growth functions with varying parameter sets for each subpopulation to model a mixture of heterogeneous populations growing according to a logistic function. 
Note, we do not assume the populations are interacting via their carrying capacity as that would necessitate the numerical solution of a nonlinear system of differential equations which is outside the focus of this manuscript.

In particular, we start with the parameters of $S$ subpopulations: $\theta_{LG}(S) = (p_i,\alpha_i,\beta_i),i = 1,\cdots,S$, and we then model the total cell count dynamics with respect to time as
\begin{equation}
\label{eq: heterogeneous logistic model}
F_{LG}(t;\theta_{LG}(S)) = \sum_{i = 1}^{S} f_i(t;p_i,\alpha_i,\beta_i) = \sum_{i = 1}^{S} F_{LG}(0)p_i\frac{1}{1+e^{-\alpha_i t + \beta_i}},
\end{equation}
where $F_{LG}(0)$ is the initial total cell count and $p_i$ is the initial proportion of each subpopulation, satisfying an equality constraint $\sum_{i = 1}^{S} p_i = 1$. 
Compared to the widely used three-parameter logistic growth model, we normalize the maximum carrying capacity for each subpopulation to 1, ensuring the identifiability of each parameter. 
We retain another two parameters, $(\alpha_i,\beta_i)$, to capture the growth behavior of type $i$ population at the inflection point.
The parameter $\alpha_i$ encodes the maximum growth rate that the type $i$ population can achieve around the inflection point, while the parameter $\beta_i$ relates to the shift in time of the inflection point. 
From a modeling perspective, there are no restrictions on $\alpha_i$ and $\beta_i$. 
However, we assume $\alpha_i \geq 0$ for all $i = 1,\cdots,S$ for a growing population and select a distinct $\beta_i$ for each subpopulation to ensure these parameters are identifiable.
The detailed selection method is specified in Appendix \ref{appx: heterogeneous logistic model details}.
For this experiment, we assume a zero observation noise. 
Thus, for a dataset observed at a given set of time points $\mathcal{T}$, 
$\mathcal{X} = \{x_{t}; t\in \mathcal{T}\}$,
we solve the least squares problem 
\begin{equation}
    \label{eq: logistic least square}
    \hat{\theta}(S) = \argmin_{\theta(S) \in \Theta(S)} \sum_{t\in \mathcal{T}} (x_{t} - F_{LG}(t; \theta(S)))^2
\end{equation}
to estimate the true parameter set $\theta^*_{LG}(S)$. 

\subsection{Numerical Results}
We compared the performance of all the algorithms in Table \ref{tab: Algorithm} for solving the least squares problem described in \eqref{eq: logistic least square}. 
Similar to the previous case study, 
we compared the methods based on the described metrics in Section~\ref{sec: Benchmark algorithms}.
Experimental details are provided in Appendix \ref{appx: heterogeneous logistic model details}. 


\begin{figure}
    \centering
    \includegraphics[width = \textwidth]{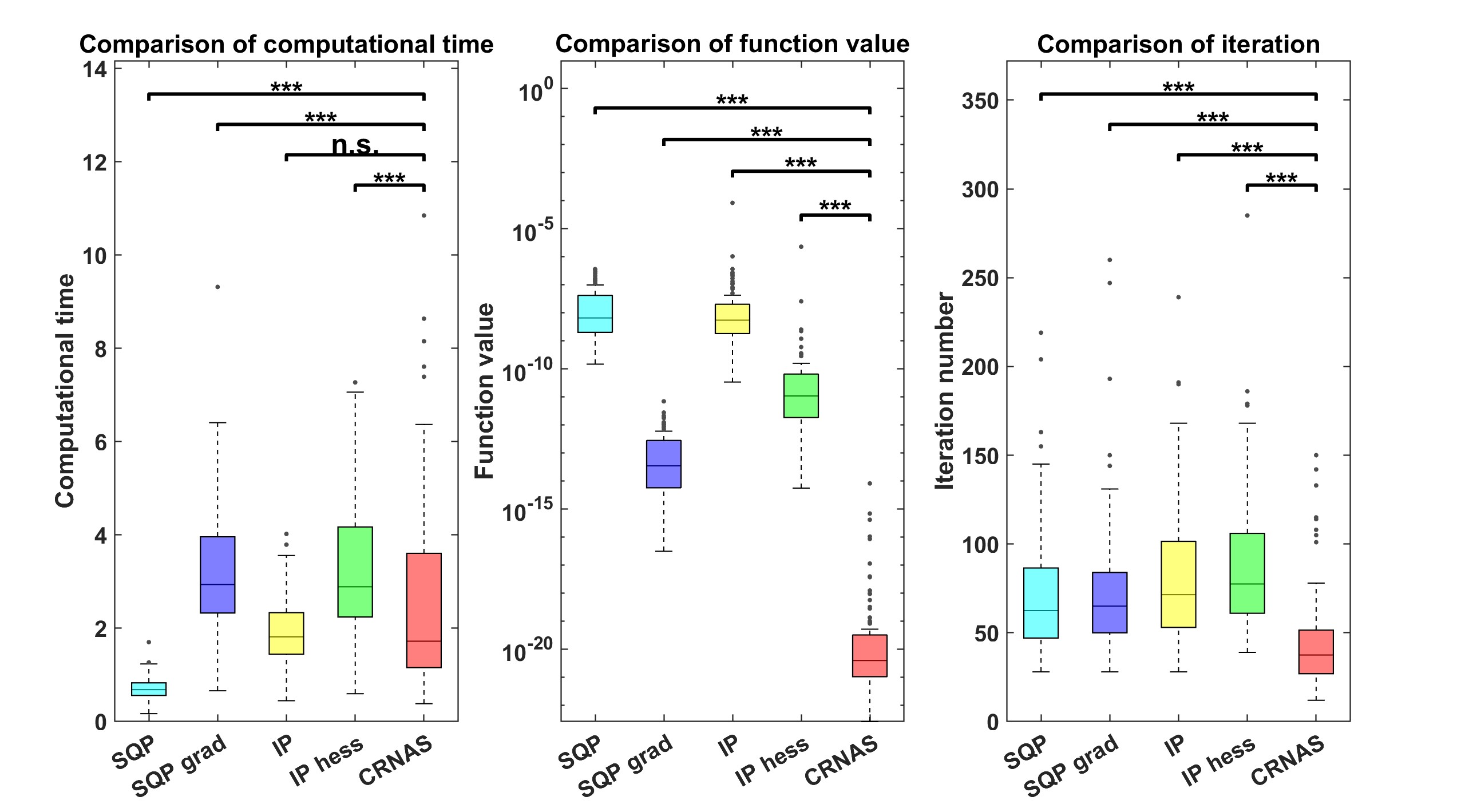}
    \caption{Comparison of the algorithms in Table \ref{tab: Algorithm} for solving the least square problem in \eqref{eq: logistic least square} with $S = 2$ subpopulations. The results are presented as in Figure \ref{fig: Deterministic S1}.}
    \label{fig: Logistic S2}
\end{figure}

From Figure \ref{fig: Logistic S2}, we see again CRNAS boasted the smallest number of iterations required to obtain the optimal solution. 
Furthermore, the middle panel in Figure \ref{fig: Logistic S2} 
shows CRNAS
yielded the lowest best objective values among all five methods. 
Based on the advantages observed in both the number iterations and objective value, we see CRNAS outperformed four different implementations of state-of-the-art algorithms in solving this highly nonlinear and non-convex parameter estimation problem. 

\section{Conclusion}

\label{sec: Conclusion}
This paper introduced a novel optimization algorithm called cubic regularized Newton based on affine scaling (CRNAS) for solving nonlinear and non-convex constrained optimization problems. 
CRNAS combines the best qualities of Nesterov's cubic regularized Newton's method and the classical affine scaling method for linear programming to produce a new approach with convergence guarantees to second-order stationary points.
The central motivator for the design of our algorithm was 
the parameter estimation problems which arise in mathematical biology.
These problems produce exceptionally challenging constrained optimization models and were able to 
demonstrate CRNAS competes with state-of-the-art algorithms on these problems.
%
%

We conducted a comparative analysis of CRNAS with two state-of-the-art local optimization methods implemented in MATLAB's commercial optimization solver \textit{fmincon}. 
This analysis was performed across two case studies, and our findings consistently demonstrated CRNAS outperformed \textit{fmincon}  
by delivering generally more accurate solutions in fewer iterations and with comparable, if not better, total compute times;
however, it is important to note that the computational effort required to compute second-order information may diminish the advantage of CRNAS. 
Since the Hessian matrix must be computed at each iteration of CRNAS,
we recommend using CRNAS when accurate second-order information can be obtained without expending an excessive amount of computational resources compared to computing first-order information.


\section*{Acknowledgements}
The work of K.\ Leder, S.\ Zhang,  C.\ Wu and N.\ Wang was partially supported by NSF Award CMMI 2228034; C.\ Garner is supported by the NSF-GRFP under Grant No.\ 2237827.

\appendix 
\renewcommand{\theequation}{A-\arabic{equation}} 

  \setcounter{equation}{0}  
  
  \section{Appendix}  
The supplemental materials included here are structured as follows: Appendix~\ref{complex} provides the proof of Theorem~\ref{thm:iteration_complexity} which provides the convergence rate of CRNAS; Appendix~\ref{subproblem} describes how to solve the crucial subproblem in CRNAS; Appendix~\ref{first-complexity} describes a first-order version of CRNAS called first-order affine scaling and provides a theoretical analysis of its performance; Appendices~\ref{appx: deterministic initialization}, \ref{appx: stochastic initialization}, and \ref{appx: heterogeneous logistic model details} provide additional details about of our numerical experiments; lastly, Appendix~\ref{appx: stopping criteria} states the termination criteria utilized by all of the algorithms in our experiments.  

\subsection{Complexity Analysis of CRNAS}\label{complex}
To begin, we quote Lemma 3 in \cite{he2023newton} which shall prove crucial in our analysis.
\begin{lemma}\label{taylor}
Based on Assumption \ref{lip}, the following inequalities hold for all $x, y \in \mbox{\rm int}(\mathcal{K})$:
\[\left\|\nabla L(y)-\nabla L(x)-\nabla^2 L(x)(y-x)\right\|_x^* \leq \frac{1}{2} \beta\|y-x\|_x^2,\]
    \[L(y) \leq L(x)+\nabla L(x)^\top(y-x)+\frac{1}{2}(y-x)^\top \nabla^2 L(x)(y-x)+\frac{1}{6} \beta \|y-x\|_x^3.\]
\end{lemma}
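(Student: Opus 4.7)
The plan is to prove both inequalities by the fundamental theorem of calculus, using the local norm structure introduced in Section~\ref{optimality} together with Assumption~\ref{lip}. Before starting, I would observe that since $\mathcal{K}$ is a convex cone and $x,y \in \mbox{\rm int}(\mathcal{K})$, the entire segment $\{x + t(y-x) : t \in [0,1]\}$ lies in $\mbox{\rm int}(\mathcal{K})$, so the Hessian is well-defined along this path and Assumption~\ref{lip} may be applied pointwise along it. I would also isolate the two ``dual-norm'' inequalities that power the whole argument: the generalized Cauchy-Schwarz $|u^\top v| \leq \|u\|_x \|v\|_x^*$, and the operator-norm bound $\|Cv\|_x^* \leq \|C\|_x^* \|v\|_x$, both of which are immediate from the definitions of $\|\cdot\|_x$, $\|\cdot\|_x^*$, and $\|\cdot\|_x^*$ for matrices.

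For the first inequality, I would write
\[
\nabla L(y) - \nabla L(x) - \nabla^2 L(x)(y-x) = \int_0^1 \bigl[\nabla^2 L(x + t(y-x)) - \nabla^2 L(x)\bigr](y-x)\, dt,
\]
take $\|\cdot\|_x^*$ on both sides, pull it inside the integral, and then apply the operator-norm bound followed by Assumption~\ref{lip} with the pair $(x,\, x+t(y-x))$. The integrand collapses to $\beta \cdot t \|y-x\|_x \cdot \|y-x\|_x$, and $\int_0^1 t\,dt = 1/2$ delivers the claimed $\tfrac{1}{2}\beta \|y-x\|_x^2$ bound.

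For the second inequality, I would use the standard one-variable identity
\[
L(y) = L(x) + \nabla L(x)^\top(y-x) + \int_0^1 \bigl[\nabla L(x + t(y-x)) - \nabla L(x)\bigr]^\top (y-x)\, dt,
\]
then add and subtract $t\, \nabla^2 L(x)(y-x)$ inside the integrand. The $t\, \nabla^2 L(x)(y-x)$ piece integrates cleanly to $\tfrac{1}{2}(y-x)^\top \nabla^2 L(x)(y-x)$ via $\int_0^1 t\, dt = 1/2$, while the remaining ``error'' integrand is estimated by applying the generalized Cauchy-Schwarz inequality and then invoking the first inequality (already proved above) with $x$ and $x + t(y-x)$ to get $\|\nabla L(x + t(y-x)) - \nabla L(x) - t\nabla^2 L(x)(y-x)\|_x^* \leq \tfrac{1}{2}\beta t^2 \|y-x\|_x^2$. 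Integrating $\int_0^1 \tfrac{1}{2}\beta t^2\, dt = \tfrac{1}{6}\beta$ against $\|y-x\|_x^3$ yields the desired cubic remainder.

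I do not expect a serious obstacle here since Assumption~\ref{lip} is essentially the exact hypothesis that makes these scaled Taylor-type bounds work; the only delicate point is making sure that the matrix norm $\|\cdot\|_x^*$ and the vector norm $\|\cdot\|_x^*$ are deployed consistently (always anchored at the base point $x$, not at the moving point $x+t(y-x)$), since Assumption~\ref{lip} controls the Hessian difference precisely in the $x$-anchored norm. Once this is respected, the two estimates follow from routine integration and the final constants $1/2$ and $1/6$ come out automatically.
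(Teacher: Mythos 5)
Your argument is correct and complete: both integral-remainder identities are valid since the segment from $x$ to $y$ stays in $\mbox{\rm int}(\mathcal{K})$ by convexity, the dual-norm and operator-norm bounds are used consistently anchored at $x$ (which is exactly how Assumption~\ref{lip} is stated), and the constants $\tfrac{1}{2}$ and $\tfrac{1}{6}$ fall out of $\int_0^1 t\,dt$ and $\int_0^1 \tfrac{1}{2}t^2\,dt$ as you say. Note that the paper itself gives no proof of this lemma --- it is quoted verbatim as Lemma~3 of \cite{he2023newton} --- so there is no in-paper argument to compare against; your derivation is the standard one and fills that gap correctly, including the slightly delicate step of applying the already-established first inequality to the pair $(x,\,x+t(y-x))$ so that the error term scales as $t^2$ before integration.
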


Observe that the affine scaling algorithm is to minimize the majorization. We show the function value decreases can be lower bounded by the norm of $\nabla L(\theta^{k+1})$ projected in the linear space in the following two lemmas.

\begin{lemma}\label{decrease}
    If $M\geq 2\beta$, then the following inequality holds:
    \[L(\theta^k)-L(\theta^{k+1})\geq \frac{M}{12}\|\theta^{k+1}-\theta^k\|_{\theta^k}^3.\]
\end{lemma}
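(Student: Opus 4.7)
The plan is to run the standard cubic-regularization ``model majorizes the truth, minimizer beats the anchor'' argument, but in the barrier-induced local norm $\|\cdot\|_{\theta^k}$ rather than the Euclidean norm. The two ingredients are already in place: the subproblem in Step~1 of CRNAS uses the cubic term $\frac{M}{6}\|\cdot\|_{\theta^k}^3$, and the second inequality in Lemma~\ref{taylor} provides a majorization of $L$ with the cubic term $\frac{\beta}{6}\|\cdot\|_{\theta^k}^3$, written in the same local norm. As long as $M$ dominates $\beta$ appropriately, the two cubic terms will combine to yield the claimed descent.

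In order, I would carry out the following steps. First, I would note that $\theta^k$ is trivially feasible for the subproblem defining $\theta^{k+1}$ (it satisfies $A\theta^k = b$ and $\|\theta^k-\theta^k\|_{\theta^k}=0\leq 1-\alpha$), and substituting $\theta = \theta^k$ into the cubic model gives value $0$. By optimality of $\theta^{k+1}$,
\[
\langle \nabla L(\theta^k), \theta^{k+1}-\theta^k\rangle + \tfrac{1}{2}\nabla^2 L(\theta^k)[\theta^{k+1}-\theta^k]^2 + \tfrac{M}{6}\|\theta^{k+1}-\theta^k\|_{\theta^k}^3 \;\leq\; 0.
\]
Second, since $\|\theta^{k+1}-\theta^k\|_{\theta^k}\leq 1-\alpha<1$, Lemma~\ref{lemma:feas} guarantees $\theta^{k+1}\in\mathrm{int}(\mathcal{K})$, so the second inequality of Lemma~\ref{taylor} applies at $(x,y)=(\theta^k,\theta^{k+1})$ and gives
\[
L(\theta^{k+1}) \;\leq\; L(\theta^k) + \langle \nabla L(\theta^k), \theta^{k+1}-\theta^k\rangle + \tfrac{1}{2}\nabla^2 L(\theta^k)[\theta^{k+1}-\theta^k]^2 + \tfrac{\beta}{6}\|\theta^{k+1}-\theta^k\|_{\theta^k}^3.
\]
Third, I would subtract the optimality inequality from the majorization to eliminate the gradient and Hessian contributions, obtaining
\[
L(\theta^{k+1}) \;\leq\; L(\theta^k) - \tfrac{M-\beta}{6}\|\theta^{k+1}-\theta^k\|_{\theta^k}^3,
\]
and finally use $M\geq 2\beta$ to conclude $M-\beta\geq M/2$, so the coefficient $\frac{M-\beta}{6}$ can be replaced by $\frac{M}{12}$, yielding the claim.

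I do not expect any serious obstacle here; the proof is a direct adaptation of the unconstrained Nesterov--Polyak descent lemma. The only point worth being careful about is that the Lipschitz-type bound in Lemma~\ref{taylor} is stated in the local norm $\|\cdot\|_{\theta^k}$ induced by $\nabla^2 B(\theta^k)$, which is exactly the same norm used in the cubic penalty of the CRNAS subproblem. This alignment of norms is what allows the two cubic terms to cancel cleanly, and it is also what makes the explicit appeal to Lemma~\ref{lemma:feas} necessary to justify that $\theta^{k+1}$ lies in the interior where the majorization is valid.
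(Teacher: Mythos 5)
Your proposal is correct and follows essentially the same argument as the paper: compare the cubic model's value at $\theta^{k+1}$ (which is $\leq 0$ by optimality, since $\theta^k$ is feasible with model value $0$) against the majorization from the second inequality of Lemma~\ref{taylor}, then use $M\geq 2\beta$ to bound $\frac{M-\beta}{6}\geq\frac{M}{12}$. Your explicit appeal to Lemma~\ref{lemma:feas} to justify that $\theta^{k+1}\in\mathrm{int}(\mathcal{K})$ is a small point of care the paper leaves implicit, but the route is identical.
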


\begin{proof}
Since $\theta^{k+1}$ is the optimal solution of the subproblem, we have
    \begin{eqnarray}
    L(\theta^k)&\geq &L(\theta^k)+\left\langle \nabla L(\theta^k), \theta^{k+1}-\theta^k\right\rangle+\frac{1}{2}\left\langle \nabla^2 L(\theta^k)(\theta^{k+1}-\theta^k), \theta^{k+1}-\theta^k\right\rangle+\frac{M}{6}\|\theta^{k+1}-\theta^k\|_{\theta^k}^3\nonumber\\
    &\geq &L(\theta^{k+1})+\frac{M}{12}\|\theta^{k+1}-\theta^k\|_{\theta^k}^3,\nonumber
    \end{eqnarray}
where the last step is according to Lemma \ref{taylor}.
\end{proof}

Because of the linear constraint $A\theta=b$, we expect $(\nabla L(\theta^{k+1}))^\top d$ to converge to zero after affine scaling, where $d$ is any given feasible direction.

\begin{lemma} \label{lemma1}
    Assume the constraint $\|\theta-\theta^k\|_{\theta^k}\leq 1-\alpha$ in the subproblem is inactive, namely $\|\theta^{k+1}-\theta^k\|_{\theta^k}< 1-\alpha$. Then, for any $d$, such as $Ad=0$, we have
    \[|(\nabla L(\theta^{k+1}))^\top d| \leq  \frac{M+\beta}{2}\|\theta^{k+1}-\theta^k\|_{\theta^k}^2\|d\|_{\theta^k}\]
\end{lemma}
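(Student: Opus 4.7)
The plan is to exploit the first-order KKT condition of the subproblem at $\theta^{k+1}$ and then use the Taylor-type estimate from Lemma~\ref{taylor} to compare $\nabla L(\theta^{k+1})$ with its local quadratic model at $\theta^k$. Since the ball constraint is assumed inactive, only the equality constraint $A\theta=b$ is active, and the cubic regularizer contributes a smooth term whose gradient we compute explicitly.

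First I would differentiate the subproblem objective with respect to $\theta$. Writing $u=\theta-\theta^k$ and $H=\nabla^2 B(\theta^k)$, the cubic term is $\tfrac{M}{6}(u^\top H u)^{3/2}$, whose gradient is $\tfrac{M}{2}\|u\|_{\theta^k} H u$. Since $\|\theta^{k+1}-\theta^k\|_{\theta^k}<1-\alpha$, the ball constraint is inactive, so the KKT condition at $\theta^{k+1}$ reads
\[
\nabla L(\theta^k)+\nabla^2 L(\theta^k)(\theta^{k+1}-\theta^k)+\frac{M}{2}\|\theta^{k+1}-\theta^k\|_{\theta^k}\,\nabla^2 B(\theta^k)(\theta^{k+1}-\theta^k)+A^\top\lambda=0
\]
for some multiplier $\lambda$. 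Taking the inner product with any $d$ satisfying $Ad=0$ kills the $A^\top\lambda$ term, giving an exact identity for $\langle\nabla L(\theta^k)+\nabla^2 L(\theta^k)(\theta^{k+1}-\theta^k),d\rangle$ in terms of the cubic-regularizer contribution.

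Next I would decompose
\[
\nabla L(\theta^{k+1})=\bigl[\nabla L(\theta^{k+1})-\nabla L(\theta^k)-\nabla^2 L(\theta^k)(\theta^{k+1}-\theta^k)\bigr]+\bigl[\nabla L(\theta^k)+\nabla^2 L(\theta^k)(\theta^{k+1}-\theta^k)\bigr]
\]
and pair each bracket with $d$. For the first bracket, the first inequality of Lemma~\ref{taylor} combined with the local dual-norm inequality $|\langle v,d\rangle|\le \|v\|_{\theta^k}^*\,\|d\|_{\theta^k}$ yields a bound of $\tfrac{\beta}{2}\|\theta^{k+1}-\theta^k\|_{\theta^k}^2\|d\|_{\theta^k}$. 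For the second bracket, the KKT identity replaces it by $-\tfrac{M}{2}\|\theta^{k+1}-\theta^k\|_{\theta^k}\,\nabla^2 B(\theta^k)(\theta^{k+1}-\theta^k)$ up to the $A^\top\lambda$ piece which vanishes; then applying Cauchy--Schwarz in the inner product induced by $\nabla^2 B(\theta^k)$ gives $|\langle\nabla^2 B(\theta^k)(\theta^{k+1}-\theta^k),d\rangle|\le\|\theta^{k+1}-\theta^k\|_{\theta^k}\|d\|_{\theta^k}$, producing the bound $\tfrac{M}{2}\|\theta^{k+1}-\theta^k\|_{\theta^k}^2\|d\|_{\theta^k}$. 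Summing the two contributions gives exactly $\tfrac{M+\beta}{2}\|\theta^{k+1}-\theta^k\|_{\theta^k}^2\|d\|_{\theta^k}$.

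I do not anticipate a major obstacle here: the lemma is essentially a two-line calculation once the gradient of the cubic term is in hand. The only mildly delicate point is being careful that, because the ball constraint is inactive, the subproblem is an equality-constrained smooth minimization whose KKT conditions are the ordinary Lagrange stationarity, so no inequality multiplier appears; and being careful to use the correct pairing of the local norm $\|\cdot\|_{\theta^k}$ with its dual $\|\cdot\|_{\theta^k}^*$ when applying both Lemma~\ref{taylor} and Cauchy--Schwarz.
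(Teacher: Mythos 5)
Your proposal is correct and follows essentially the same route as the paper: write the Lagrange stationarity condition of the (inactive-ball) subproblem, pair it with a null-space direction $d$ to eliminate the multiplier and bound the quadratic-model term by $\tfrac{M}{2}\|\theta^{k+1}-\theta^k\|_{\theta^k}^2\|d\|_{\theta^k}$ via Cauchy--Schwarz in the $\nabla^2 B(\theta^k)$ inner product, then control $\nabla L(\theta^{k+1})$ minus its quadratic model by the first inequality of Lemma~\ref{taylor} and the dual-norm pairing. The only cosmetic difference is that you make the cubic term's gradient and the multiplier $A^\top\lambda$ explicit, which the paper leaves implicit.
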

\begin{proof}
Because the constraint $\|\theta-\theta^k\|_{\theta^k}\leq 1-\alpha$ is inactive, any $d$ satisfying $Ad=0$ must be a feasible direction. According to the optimality condition, we have
\[\left(\nabla L(\theta^k)+\nabla^2 L(\theta^k)(\theta^{k+1}-\theta^k)\right)^\top d+\frac{M}{2}\|\theta^{k+1}-\theta^k\|_{\theta^k}(\theta^{k+1}-\theta^k)^\top(\nabla^2 B(\theta^k))d=0,\]
which implies that
\[|\left(\nabla L(\theta^k)+\nabla^2 L(\theta^k)(\theta^{k+1}-\theta^k)\right)^\top d| = \frac{M}{2}\|\theta^{k+1}-\theta^k\|_{\theta^k}|(\theta^{k+1}-\theta^k)^\top (\nabla^2 B(\theta^k))d|\leq \frac{M}{2}\|\theta^{k+1}-\theta^k\|_{\theta^k}^2\|d\|_{\theta^k}\]
According to Lemma \ref{taylor}, we have
    $\left\|\nabla L(\theta^{k+1})-\nabla L(\theta^k)-\nabla^2 L(\theta^k)(\theta^{k+1}-\theta^k)\right\|_{\theta^k}^* \leq  \frac{\beta}{2}\|\theta^{k+1}-\theta^k\|_{\theta^k}^2$. So,
    \[|\left(\nabla L(\theta^{k+1})-\nabla L(\theta^k)-\nabla^2 L(\theta^k)(\theta^{k+1}-\theta^k)\right)^\top d|\leq \frac{\beta}{2}\|\theta^{k+1}-\theta^k\|_{\theta^k}^2\|d\|_{\theta^k}\]
    Combining the above two, we get the inequality.
\end{proof}

We give the lemma below to show the second-order optimality of the solution obtained from the algorithm.
\begin{lemma} \label{lemma2}
Assume the constraint $\|\theta-\theta^k\|_{\theta^k}\leq 1-\alpha$ in the subproblem is inactive, namely $\|\theta^{k+1}-\theta^k\|_{\theta^k}< 1-\alpha$. Then, for any $d$, such as $Ad=0$, we have
    \[d^\top \nabla^2 L(\theta^{k+1})d\geq -(M+\beta)\|\theta^{k+1}-\theta^k\|_{\theta^k}d^\top \nabla^2 B(\theta^k)d\]
\end{lemma}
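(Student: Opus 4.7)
The plan is to derive the inequality directly from the second-order necessary optimality condition for the subproblem and then transfer from the Hessian at $\theta^k$ to the Hessian at $\theta^{k+1}$ using Assumption~\ref{lip}. Since by hypothesis the trust-region constraint $\|\theta-\theta^k\|_{\theta^k}\leq 1-\alpha$ is inactive at the minimizer $\theta^{k+1}$, the only active constraint is the linear one $A\theta=b$. Hence for every $d$ with $Ad=0$ the direction $d$ is a feasible direction at $\theta^{k+1}$, and the standard second-order necessary condition for the subproblem says that $d^\top \nabla^2 \Phi_k(\theta^{k+1}) d\geq 0$, where $\Phi_k$ denotes the subproblem objective used in \eqref{eq:main_subproblem}.

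First I would compute $\nabla^2 \Phi_k(\theta^{k+1})$ explicitly. Writing $s=\theta^{k+1}-\theta^k$ and $H=\nabla^2 B(\theta^k)$, the cubic term $\tfrac{M}{6}(s^\top H s)^{3/2}$ has Hessian (with respect to $\theta$) equal to
\[
\frac{M}{2}\|s\|_{\theta^k}\,H \;+\; \frac{M}{2\|s\|_{\theta^k}}\,H s s^\top H,
\]
so that
\[
\nabla^2 \Phi_k(\theta^{k+1}) \;=\; \nabla^2 L(\theta^k) + \frac{M}{2}\|s\|_{\theta^k}\,H + \frac{M}{2\|s\|_{\theta^k}}\,Hss^\top H.
\]
The second-order necessary condition applied to any $d$ with $Ad=0$ then yields
\[
d^\top \nabla^2 L(\theta^k) d \;+\; \frac{M}{2}\|s\|_{\theta^k}\,d^\top H d \;+\; \frac{M}{2\|s\|_{\theta^k}}\,(d^\top H s)^2 \;\geq\; 0.
\]
The last term is manifestly nonnegative, so it can be dropped to give $d^\top \nabla^2 L(\theta^k) d \geq -\tfrac{M}{2}\|s\|_{\theta^k}\, d^\top H d$.

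Next I would transfer the lower bound to $\nabla^2 L(\theta^{k+1})$. By Assumption~\ref{lip}, $\|\nabla^2 L(\theta^{k+1})-\nabla^2 L(\theta^k)\|_{\theta^k}^{*}\leq \beta\|s\|_{\theta^k}$. Combined with the elementary inequality $|d^\top C d|\leq \|C\|_{\theta^k}^{*}\,\|d\|_{\theta^k}^{2}$ (which follows from Cauchy--Schwarz after splitting $d^\top C d = (H^{1/2}d)^\top (H^{-1/2}Cd)$), this gives
\[
\bigl|d^\top \bigl(\nabla^2 L(\theta^{k+1})-\nabla^2 L(\theta^k)\bigr) d\bigr| \;\leq\; \beta\,\|s\|_{\theta^k}\,d^\top H d.
\]
Adding the two estimates yields $d^\top \nabla^2 L(\theta^{k+1}) d \geq -\bigl(\tfrac{M}{2}+\beta\bigr)\|s\|_{\theta^k}\, d^\top \nabla^2 B(\theta^k) d$, which is slightly sharper than (and therefore implies) the stated bound with constant $(M+\beta)$.

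The only mildly delicate step is the Hessian calculation for the cubic term and the observation that the rank-one piece $Hss^\top H$ contributes nonnegatively on every direction, so it can be discarded without loss; everything else reduces to Assumption~\ref{lip} and the standard matrix–norm inequality in the local metric. I do not expect any obstacle beyond this bookkeeping.
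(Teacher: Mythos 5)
Your proof is correct and follows essentially the same route as the paper: the second-order necessary condition for the subproblem restricted to $\{d: Ad=0\}$, followed by the Hessian-perturbation bound from Assumption~\ref{lip}. The only difference is bookkeeping — you drop the nonnegative rank-one term $Hss^\top H$ outright and obtain the slightly sharper constant $\tfrac{M}{2}+\beta$, whereas the paper absorbs that term via Cauchy--Schwarz into the coefficient $M$, yielding $M+\beta$; both yield the stated bound.
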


\begin{proof}
Because the constraint $\|\theta-\theta^k\|_{\theta^k}\leq 1-\alpha$ is inactive, any $d$ satisfying $Ad=0$ must be a feasible direction. According to the second-order optimality condition, we have
\[d^\top (\nabla^2 L(\theta^k)+\frac{M}{2}(\nabla^2 B(\theta^k))(\theta^{k+1}-\theta^k)(\theta^{k+1}-\theta^k)^\top (\nabla^2 B(\theta^k))/\|\theta^{k+1}-\theta^k\|_{\theta^k}+\frac{M}{4}\|\theta^{k+1}-\theta^k\|_{\theta^k}\nabla^2 B(\theta^k))d\geq 0\]

So, we observe that
\begin{eqnarray}
    &&d^\top \left(\nabla^2 L(\theta^k)+M\|\theta^{k+1}-\theta^k\|_{\theta^k}\nabla^2 B(\theta^k)\right)d\\
    &\geq& d^\top \left(\nabla^2 L(\theta^k)+\frac{M}{2}(\nabla^2 B(\theta^k))(\theta^{k+1}-\theta^k)(\theta^{k+1}-\theta^k)^\top(\nabla^2 B(\theta^k))/\|\theta^{k+1}-\theta^k\|_{\theta^k}+\frac{M}{4}\|\theta^{k+1}-\theta^k\|_{\theta^k}\nabla^2 B(\theta^k)\right)d\nonumber\\
    &\geq& 0.\nonumber
\end{eqnarray}
According to Assumption \ref{lip}, we have
    \[\nabla^2 L(\theta^{k+1})\succeq \nabla^2 L(\theta^k)-\beta\|\theta^{k+1}-\theta^k\|_{\theta^k}\nabla^2 B(\theta^k).\]
Combining the above two inequalities, we have
\[d^\top \nabla^2 L(\theta^{k+1})d\geq -(M+\beta)\|\theta^{k+1}-\theta^k\|_{\theta^k}d^\top\nabla^2 B(\theta^k)d.\]
\end{proof}


According to Lemma \ref{decrease}, we have
\[L(\theta^k)-L(\theta^{k+1})\geq \frac{M}{12}\|\theta^{k+1}-\theta^k\|_{\theta^k}^3\geq \frac{M\eta^3}{12}, k=0,1,2,...,K-2,\]
which implies the decrease guarantee in each step so that the algorithm will stop in finite steps. Telescoping the above inequalities, we have
\[\frac{M\eta^3}{12}(K-1)\leq \sum_{k=0}^{K-2}(L(\theta^k)-L(\theta^{k+1}))=L(\theta^0)-L(\theta^{K-1})\leq L(\theta^0)-L^*.\]

Therefore, we can give an upper bound for the number of iterations $K\leq 12(L(\theta^0)-L^*)\eta^{-3}+1$. According to Lemma \ref{lemma1} and \ref{lemma2} with $M\geq 2\beta$ and $\eta\leq 1-\alpha$, we have the following bounds.

For any $d$, such as $Ad=0$, we have
\[|(\nabla L(\theta^K))^\top  d|\leq \frac{M+\beta}{2}\|\theta^{K}-\theta^{K-1}\|_{\theta^{K-1}}^2\|d\|_{\theta^{K-1}}\leq
M\eta^2\|d\|_{\theta^{K-1}} \]
and 
\[d^\top \nabla^2 L(\theta^{K}) d\geq  -(M+\beta)\|\theta^{K}-\theta^{K-1}\|_{\theta^{K-1}}d^\top \nabla^2 B(\theta^{K-1})d\geq -2M\eta d^\top \nabla^2 B(\theta^{K-1})d\]




Given that the right hand sides of the above two inequalities also involve $\theta^{K-1}$, we site the following lemma to change them into $\theta^{K}$.
\begin{lemma} (Theorem 2.1.1 in \cite{nesterov1994interior})
    Let $B$ be a-self-concordant on $\mathcal{K}$ and let $\theta_0\in  \mbox{\rm int}(\mathcal{K})$, then for any $\theta\in  \mbox{\rm int}(\mathcal{K})$ and $\|\theta-\theta_0\|_{\theta_0}<1$, it holds that $(1-\|\theta-\theta_0\|_{\theta_0})^2 \leq u^\top \nabla^2 B(\theta_0) uu^\top \nabla^2 B(\theta) u \leq \frac{u^\top \nabla B(\theta_0) u}{(1-\|\theta-\theta_0\|_{\theta_0})^2}$.
\end{lemma}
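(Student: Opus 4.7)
The statement in question is the classical Dikin-ellipsoid Hessian comparison for a self-concordant barrier: if $\|\theta - \theta_0\|_{\theta_0} < 1$, then for every $u$,
\[
(1-\|\theta-\theta_0\|_{\theta_0})^2 \;\leq\; \frac{u^\top \nabla^2 B(\theta) u}{u^\top \nabla^2 B(\theta_0) u} \;\leq\; \frac{1}{(1-\|\theta-\theta_0\|_{\theta_0})^2}.
\]
My plan is to reduce the matrix inequality to a pair of scalar differential inequalities along the line segment connecting $\theta_0$ to $\theta$, and integrate them using only the cubic self-concordance inequality stated at the top of Subsection~\ref{optimality}.

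First, fix $h = \theta - \theta_0$ and consider the segment $\theta_t = \theta_0 + th$, $t \in [0,1]$; by the self-concordance barrier property and the hypothesis $\|h\|_{\theta_0} < 1$, Lemma~\ref{lemma:feas} guarantees $\theta_t \in \mbox{\rm int}(\mathcal{K})$ for all $t \in [0,1]$. Introduce the scalar functions $\psi(t) := u^\top \nabla^2 B(\theta_t) u$ and $r(t) := \|h\|_{\theta_t} = (h^\top \nabla^2 B(\theta_t) h)^{1/2}$. The key analytic step is to upgrade the ``diagonal'' self-concordance inequality $|\nabla^3 B(\theta)[v,v,v]| \leq 2\|v\|_\theta^3$ to the polarized trilinear bound
\[
|\nabla^3 B(\theta_t)[u,u,h]| \;\leq\; 2\, \psi(t)\, r(t),
\]
which follows from a standard polarization argument applied to the symmetric trilinear form $\nabla^3 B(\theta_t)$, using Cauchy–Schwarz in the inner product $\langle \cdot, \cdot \rangle_{\theta_t}$ defined by $\nabla^2 B(\theta_t)$.

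Second, I will derive two differential inequalities. Differentiating $r(t)^2 = h^\top \nabla^2 B(\theta_t) h$ gives $2 r(t) r'(t) = \nabla^3 B(\theta_t)[h,h,h]$, so the diagonal cubic bound yields $|r'(t)| \leq r(t)^2$. Solving this ODE inequality with $r(0) = \|h\|_{\theta_0} =: \rho$ produces the envelope $r(t) \leq \rho/(1-t\rho)$ on $[0,1)$. Next, differentiating $\psi$ gives $\psi'(t) = \nabla^3 B(\theta_t)[u,u,h]$, and the polarized bound above yields
\[
\left| \frac{d}{dt} \log \psi(t) \right| \;=\; \frac{|\psi'(t)|}{\psi(t)} \;\leq\; 2 r(t) \;\leq\; \frac{2\rho}{1 - t\rho}.
\]

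Third, integrate the log-derivative bound from $0$ to $1$:
\[
\left| \log \psi(1) - \log \psi(0) \right| \;\leq\; \int_0^1 \frac{2\rho}{1-t\rho}\, dt \;=\; -2\log(1-\rho).
\]
Exponentiating and rearranging gives $(1-\rho)^2 \leq \psi(1)/\psi(0) \leq 1/(1-\rho)^2$, which is exactly the desired sandwich, since $\psi(0) = u^\top \nabla^2 B(\theta_0) u$ and $\psi(1) = u^\top \nabla^2 B(\theta) u$. The potential edge case $u^\top \nabla^2 B(\theta_0) u = 0$ is excluded by the nondegeneracy of $\nabla^2 B$ on $\mbox{\rm int}(\mathcal{K})$ (a consequence of $B$ being a barrier whose Hessian defines a genuine local norm on a solid cone), or else the inequality holds trivially after taking limits.

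The main obstacle I anticipate is the polarization step: turning $|\nabla^3 B(\theta_t)[v,v,v]| \leq 2\|v\|_{\theta_t}^3$ into the off-diagonal trilinear estimate for $\nabla^3 B(\theta_t)[u,u,h]$. The cleanest route is to note that $\nabla^3 B(\theta_t)$ is a symmetric trilinear form and apply the general fact that for any symmetric trilinear form $T$ on a Euclidean space with local inner product $\langle\cdot,\cdot\rangle_{\theta_t}$, the sharp bound $|T(u,u,h)| \leq C\|u\|^2 \|h\|$ follows from $|T(v,v,v)| \leq C\|v\|^3$ via optimization over linear combinations $v = \lambda u + \mu h$; this is precisely how Nesterov–Todd establish the polarized form. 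Once this is in hand, the remaining steps are standard ODE comparisons and no further technical subtlety arises.
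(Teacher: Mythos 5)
Your proof is correct, but it is worth noting that the paper itself offers no proof of this lemma at all: it is quoted verbatim as Theorem 2.1.1 of Nesterov and Nemirovskii's monograph and used as a black box (indeed, the statement as printed in the paper contains typographical slips --- a missing fraction bar in the middle expression and a $\nabla B$ that should be $\nabla^2 B$ --- which you have correctly repaired in your restatement). What you have written is essentially the standard proof of that cited theorem, and it is sound: the segment $\theta_t=\theta_0+th$ stays in ${\rm int}(\mathcal{K})$ because $\|\theta_t-\theta_0\|_{\theta_0}=t\|h\|_{\theta_0}<1$; the bound $|r'(t)|\leq r(t)^2$ integrates to $r(t)\leq \rho/(1-t\rho)$; and the log-derivative estimate $|(\log\psi)'(t)|\leq 2r(t)$ integrates to exactly $-2\log(1-\rho)$, giving the two-sided sandwich. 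The one step you rightly flag as the crux --- upgrading the diagonal inequality $|\nabla^3 B[v,v,v]|\leq 2\|v\|_\theta^3$ to $|\nabla^3 B[u,u,h]|\leq 2\|u\|_\theta^2\|h\|_\theta$ --- is legitimate: for a \emph{symmetric} trilinear form on a Euclidean space the norm is attained on the diagonal (this is Proposition A.1.1 in the appendix of Nesterov--Nemirovskii, and the constant $2$ is preserved), so no loss occurs in the polarization. Your handling of the degenerate case $u^\top\nabla^2 B(\theta_0)u=0$ is also fine, since a self-concordant barrier on a solid pointed cone has a nondegenerate Hessian on the interior. In short: the paper buys the result by citation; you buy it by a self-contained ODE-comparison argument whose only nonelementary ingredient is the symmetric-multilinear-form norm identity, which you would need to either cite or prove in full to make the writeup airtight.
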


Since $\|\theta^{K}-\theta^{K-1}\|_{\theta^{K-1}}<\eta\leq 1-\alpha$, we have $\alpha^2\nabla^2 B(\theta^{K})\preceq\nabla^2 B(\theta^{K-1})\preceq \frac{1}{\alpha^2}\nabla^2 B(\theta^{K})$. The above two inequalities imply 
\[|(\nabla L(\theta^K))^\top  d|\leq
M\eta^2\|d\|_{\theta^{K-1}}\leq M\eta^2\alpha^{-1}\|d\|_{\theta^{K}} \]
and 
\[
d^\top \nabla^2 L(\theta^{K}) d\geq -2M\eta d^\top \nabla^2 B(\theta^{K-1})d\geq -2M\eta\alpha^{-2} d^\top \nabla^2 B(\theta^{K})d.
\]
We conclude the following theorem.
\begin{theorem} \label{iteration_complexity}
    Apply Algorithm CRNAS \ref{alg} with Assumption \ref{lip} and let $M=2\beta$ and $\eta=\min\{1-\alpha,\epsilon^{-1/2}\alpha^{1/2}M^{-1/2},\frac{1}{\sqrt{2}}\epsilon^{-1/2}\alpha^{2}M^{-1}\}$. Then the algorithm will stop at $K\leq 12(L(\theta^0)-L^*)\eta^{-3}+1$, and $\theta^K$ is a second-order $\epsilon$-optimal solution.
\end{theorem}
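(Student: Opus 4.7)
The plan is to turn the per-iteration properties of the cubic subproblem into two pieces: an iteration count obtained from sufficient descent, and the $\epsilon$-SOSP certificate at the final iterate obtained by pushing the subproblem's KKT information through a norm-conversion argument based on self-concordance. Most of the heavy machinery has been set up in Lemmas~\ref{lemma:feas}, \ref{taylor}, \ref{decrease}, \ref{lemma1}, \ref{lemma2}, so the proof is really an assembly argument with a careful tracking of constants.

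First I would dispatch feasibility and the iteration bound. The Dikin-ball constraint $\|\theta-\theta^k\|_{\theta^k}\leq 1-\alpha<1$ together with Lemma~\ref{lemma:feas} keeps every iterate in $\mbox{\rm int}(\mathcal{K})$, so all local norms and barrier quantities are well-defined throughout the run. With $M=2\beta$, Lemma~\ref{decrease} yields a per-step decrease of at least $\tfrac{M}{12}\|\theta^{k+1}-\theta^k\|_{\theta^k}^3$. As long as the stopping test has not been triggered, $\|\theta^{k+1}-\theta^k\|_{\theta^k}\geq \eta$, so each iterate reduces $L$ by at least $M\eta^3/12$. Telescoping over $k=0,\ldots,K-2$ and using $L(\theta^{K-1})\geq L^*$ gives the bound $K\leq 12(L(\theta^0)-L^*)\eta^{-3}+1$.

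Next I would verify the two $\epsilon$-SOSP conditions at $\theta^K$. Since the algorithm terminates only when $\|\theta^K-\theta^{K-1}\|_{\theta^{K-1}}<\eta\leq 1-\alpha$, the ball constraint in the final subproblem is strictly inactive, so its Lagrange multiplier vanishes and the subproblem's first- and second-order KKT conditions (Lemmas~\ref{lemma1} and~\ref{lemma2}) apply directly, yielding
\[|(\nabla L(\theta^K))^\top d|\leq M\eta^2\|d\|_{\theta^{K-1}}, \qquad d^\top\nabla^2 L(\theta^K)d\geq -2M\eta\, d^\top\nabla^2 B(\theta^{K-1})d\]
for every $d$ with $Ad=0$. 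The $\epsilon$-SOSP definition in Subsection~\ref{optimality} is stated in the $\theta^K$-local norm, so I would convert via the Dikin-ball self-concordance bound (Theorem~2.1.1 of \cite{nesterov1994interior}), which using $\|\theta^K-\theta^{K-1}\|_{\theta^{K-1}}\leq 1-\alpha$ gives $\alpha^2\nabla^2 B(\theta^K)\preceq\nabla^2 B(\theta^{K-1})\preceq\alpha^{-2}\nabla^2 B(\theta^K)$. Plugging this in produces the two target bounds $M\eta^2\alpha^{-1}\|d\|_{\theta^K}$ and $2M\eta\alpha^{-2}\|d\|_{\theta^K}^2$; requiring them to be at most $\epsilon$ and $\sqrt{\epsilon}$ respectively isolates the two $\epsilon$-dependent terms in the prescribed $\eta$, while $\eta\leq 1-\alpha$ secures inactivity of the ball constraint.

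The main obstacle is the last bookkeeping step. The subproblem's KKT conditions naturally live in the $\theta^{K-1}$-norm, while the $\epsilon$-SOSP definition demands the $\theta^K$-norm, and one must keep the $\alpha^{-1}$ and $\alpha^{-2}$ factors straight while simultaneously guaranteeing that the ball multiplier truly vanishes. The Dikin-ball self-concordance inequality is the key tool, and its $\alpha$-dependent proportionality constants are precisely what force the $\alpha^{1/2}$ and $\alpha^2$ scalings appearing in the definition of $\eta$; any weakening of these constants would break either feasibility ($\eta\not\leq 1-\alpha$) or the $\epsilon$-SOSP certificate.
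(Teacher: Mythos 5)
Your proposal is correct and follows essentially the same route as the paper: telescoping the sufficient-decrease bound of Lemma~\ref{decrease} for the iteration count, invoking Lemmas~\ref{lemma1} and~\ref{lemma2} at the final (inactive-constraint) step, and converting from the $\theta^{K-1}$-norm to the $\theta^{K}$-norm via the self-concordance bound $\alpha^2\nabla^2 B(\theta^{K})\preceq\nabla^2 B(\theta^{K-1})\preceq\alpha^{-2}\nabla^2 B(\theta^{K})$, exactly as in Appendix~\ref{complex}. The constants you track ($M\eta^2\alpha^{-1}$ and $2M\eta\alpha^{-2}$) match the paper's.
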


\subsection{Solving the Subproblem}\label{subproblem}
In each step, we solve the following subproblem.
   \begin{eqnarray}
        \theta^{k+1}&=&\arg \min _{\theta:A\theta=b,\|\theta-\theta^k\|_{\theta^k}\leq 1-\alpha} \left(\left\langle\nabla L\left(\theta^k\right), \theta-\theta^k\right\rangle +\frac{1}{2}\nabla^2L\left(\theta^k\right)[\theta-\theta^k]^2+\frac{M}{6}\left\|\theta-\theta^k\right\|_{\theta^k}^3\right).\nonumber
    \end{eqnarray}
     Let $T$ be an orthogonal matrix whose columns form a basis of the linear space $A\theta=0$ {\color{blue} nontrival }. Then, the linear constraint $A\theta=b$ can be replaced by $\theta=\theta^k+T\theta'$.
Let $\left\|\theta-\theta^k\right\|_{\theta^k}=\|(\nabla^2 B(\theta^k))^{1/2}(\theta-\theta^k)\|=\|(\nabla^2 B(\theta^k))^{1/2}T\theta'\|=\|(T^\top\nabla^2 B(\theta^k)T)^{1/2}\theta'\|$. The matrix $T^\top\nabla^2 B(\theta^k)T$ is invertible because $T^\top\nabla^2 B(\theta^k)T=((\nabla^2 B(\theta^k))^{1/2}T)^\top((\nabla^2 B(\theta^k))^{1/2}T)$ where $T$ is of full column rank and $\nabla^2 B(\theta^k)$ is of full rank. Then by letting $\bar{\theta}=(T^\top\nabla^2 B(\theta^k)T)^{1/2}\theta'$, the subproblem can be written as 
\[\min_{\|\bar{\theta}\|\leq 1-\alpha} m(\bar{\theta}):=g^\top \bar{\theta}+\frac{1}{2}\bar{\theta}^\top P\bar{\theta}+\frac{M}{6}\|\bar{\theta}\|^3\]
where $g$ is a vector and $P$ is a matrix with corresponding sizes.

Note in the subproblem of standard cubic Newton, we solve $\min_{\bar{\theta}}m(\bar{\theta})$, where we confine $\|\bar{\theta}\|\leq 1-\alpha$ in the above subproblem. If the solution to $\min_{\bar{\theta}}m(\bar{\theta})$  satisfies $\|\bar{\theta}^*\|\leq 1-\alpha$, then we get the solution. Otherwise, we introduce the following two lemmas to solve the subproblem.

\begin{lemma}\label{sub}
Let $\hat{\theta}\in\argmin_{\|\bar{\theta}\|\leq 1-\alpha} m(\bar{\theta})$ and $\bar{\theta}^*\in\min_{\bar{\theta}} m(\bar{\theta})$. If $\|\bar{\theta}^*\|>1-\alpha$, then we must have $\|\hat{\theta}\|=1-\alpha$.
\end{lemma}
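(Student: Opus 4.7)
My plan is to prove the lemma by contradiction. Suppose $\|\hat{\theta}\| < 1-\alpha$; then $\hat{\theta}$ lies strictly in the interior of the feasible ball, so the constraint $\|\bar{\theta}\|\leq 1-\alpha$ is inactive at $\hat{\theta}$ and $\hat{\theta}$ must be an unconstrained local minimizer of the smooth function $m$. In particular it satisfies the first-order condition
\begin{equation*}
\nabla m(\hat{\theta}) = g + \left(P + \tfrac{M}{2}\|\hat{\theta}\|I\right)\hat{\theta} = 0
\end{equation*}
and the second-order necessary condition $\nabla^2 m(\hat{\theta}) \succeq 0$.

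Next, I would invoke the classical global-optimality characterization of the cubic subproblem due to Nesterov and Polyak~\cite{nesterov2006cubic}: a vector $\bar{\theta}$ is a global minimizer of $m$ if and only if $\nabla m(\bar{\theta}) = 0$ and $P + \tfrac{M}{2}\|\bar{\theta}\|I \succeq 0$, and every local minimizer of $m$ already satisfies these two conditions. Moreover, the norm of any global minimizer is uniquely determined: setting $\lambda = \tfrac{M}{2}\|\bar{\theta}\|$, the scalar $\lambda$ is the unique root in $[\max\{0,-\lambda_{\min}(P)\},\infty)$ of the monotone equation $\lambda = \tfrac{M}{2}\|(P+\lambda I)^{\dagger}g\|$, since the right-hand side is strictly decreasing in $\lambda$ while the left-hand side is strictly increasing. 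Applying this characterization to $\hat{\theta}$ gives $\|\hat{\theta}\| = \|\bar{\theta}^*\|$, contradicting the standing hypothesis $\|\hat{\theta}\| < 1-\alpha < \|\bar{\theta}^*\|$. Combined with feasibility $\|\hat{\theta}\| \leq 1-\alpha$, this forces $\|\hat{\theta}\| = 1-\alpha$, as claimed.

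The main obstacle, should a self-contained proof be preferred over citation, is bridging the gap between the textbook second-order necessary condition $\nabla^2 m(\hat{\theta})\succeq 0$ and the a priori stronger global-optimality condition $P + \tfrac{M}{2}\|\hat{\theta}\|I \succeq 0$; these differ by the rank-one correction $\tfrac{M}{2\|\hat{\theta}\|}\hat{\theta}\hat{\theta}^{\top}$. The cleanest route is to decompose variations at $\hat{\theta}$ into tangential and radial components: constant-norm perturbations immediately yield $P + \tfrac{M}{2}\|\hat{\theta}\|I \succeq 0$ on $\hat{\theta}^{\perp}$, and any putative negative eigenvalue of $P + \tfrac{M}{2}\|\hat{\theta}\|I$ whose eigenvector has a nonzero component along $\hat{\theta}$ can be shown to generate a third-order direction of descent for $m$, contradicting local optimality. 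The degenerate case $\hat{\theta} = 0$, which by the gradient equation forces $g = 0$, is handled separately: the problem then reduces to minimizing $\tfrac{1}{2}\bar{\theta}^{\top}P\bar{\theta} + \tfrac{M}{6}\|\bar{\theta}\|^3$, and $\bar{\theta}^*\neq 0$ forces $P$ to have a negative eigenvalue, in which case $0$ itself is not a local minimum.
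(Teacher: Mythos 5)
There is a genuine gap at the pivot of your argument: the claim that \emph{every} local minimizer of $m$ satisfies $P+\tfrac{M}{2}\|\bar{\theta}\|I\succeq 0$ is false, and the repair you sketch does not close it. The cubic model admits local--nonglobal minimizers at which this matrix has a negative eigenvalue. Already in one dimension, take $m(t)=t-3t^2+|t|^3$ (i.e.\ $g=1$, $P=-6$, $M=6$): the points $t\approx 1.82$ and $t\approx -2.15$ are both strict local minima, the global one is the negative root, and at the positive root one has $P+\tfrac{M}{2}|t|\approx -0.55<0$ even though $m''(t)>0$ there. This also defeats your proposed bridge from $\nabla^2 m(\hat{\theta})\succeq 0$ to $P+\tfrac{M}{2}\|\hat{\theta}\|I\succeq 0$: at such a point the negative eigenvector of $P+\tfrac{M}{2}\|\hat{\theta}\|I$ points along $\hat{\theta}$, yet there is no descent direction of any order because the point is a strict local minimum. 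Consequently your appeal to the Nesterov--Polyak global characterization, and the ensuing conclusion $\|\hat{\theta}\|=\|\bar{\theta}^*\|$, is not justified by local optimality of $\hat{\theta}$ alone.

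What the argument must use --- and what the paper's proof (following Theorem 3.1 of Cartis--Gould--Toint) does use --- is that $\hat{\theta}$ minimizes $m$ over the \emph{entire} ball $\{\|\bar{\theta}\|\leq 1-\alpha\}$, not merely over a neighborhood. For a direction $w$ with $w^\top\hat{\theta}\neq 0$, the line $\hat{\theta}+tw$ meets the sphere $\{\|\theta\|=\|\hat{\theta}\|\}$ at a second point $\theta'$, which may be far from $\hat{\theta}$ but is still feasible since $\|\theta'\|=\|\hat{\theta}\|<1-\alpha$; the global inequality $m(\theta')\geq m(\hat{\theta})$, combined with the stationarity relation $g=-(P+\hat{\lambda}I)\hat{\theta}$ and $\|\theta'\|=\|\hat{\theta}\|$, telescopes exactly to $\tfrac{1}{2}w^\top(P+\hat{\lambda}I)w\geq 0$. (Directions tangent to the sphere are handled by the local second-order condition, as you note.) Once $P+\hat{\lambda}I\succeq 0$ is in hand, your closing step is sound and essentially matches the paper's: comparing the stationarity equations for $\hat{\theta}$ and $\bar{\theta}^*$ in the eigenbasis of $P$, with $\bar{\lambda}^*=\tfrac{M}{2}\|\bar{\theta}^*\|>\hat{\lambda}\geq -\lambda_{\min}(P)$, forces $\|\bar{\theta}^*\|\leq\|\hat{\theta}\|$, contradicting $\|\bar{\theta}^*\|>1-\alpha>\|\hat{\theta}\|$. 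So the deficiency is confined to, but concentrated at, the positive-semidefiniteness step: it cannot be obtained from local optimality, only from minimality over the ball.
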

\begin{proof}
    We cited part of the proof of Theorem 3.1 in \cite{cartis2011adaptive}. Suppose $\|\hat{\theta}\|<1-\alpha$, we will show $\hat{\theta}$ is a global minimum and no global solution will satisfy $\|\theta\|<1-\alpha$. Since $\hat{\theta}$ is an interior point of the set $\{\theta:\|\theta\|\leq 1-\alpha\}$, according to the first and second-order necessary optimality
conditions
\[g+(P+\hat{\lambda} I)\hat{\theta}=\textbf{0}\label{first}\]
and
\begin{equation}
w^\top \left(P+\hat{\lambda} I+\hat{\lambda} \left(\frac{\hat{\theta}}{\|\hat{\theta}\|}\right) \left(\frac{\hat{\theta}}{\|\hat{\theta}\|}\right)^\top\right) w\geq 0\label{sec}\end{equation}
for all vectors $w$, where $\hat{\lambda}=\frac{M}{2}\|\hat{\theta}\|$.
If $\|\hat{\theta}\|=0$, the second-order optimizality condition is $P+\hat{\lambda} I\succeq 0$. We will show for $\|\hat{\theta}\|\neq 0$, $P+\hat{\lambda} I\succeq 0$ still holds.

For $w^\top\hat{\theta}=0$, \eqref{sec} shows that $w^\top(P+\hat{\lambda} I)w\geq 0$. Consider the vector $w^\top\hat{\theta}\neq0$, the line
$\hat{\theta}+\alpha w$ intersects the ball of radius $\hat{\theta}$ at two points, $\hat{\theta}\neq \theta'$, where $\|\hat{\theta}\|=\|\theta'\|$. Without loss of generality, let $w=\theta'-\hat{\theta}$. Since $\hat{\theta}\in\argmin_{\|\bar{\theta}\|\leq 1-\alpha} m(\bar{\theta})$, we have
\begin{eqnarray}
    0&\leq& m(\theta')-m(\hat{\theta})\nonumber\\
    &=& g^\top(\theta'-\hat{\theta})+\frac{1}{2}\theta'^\top P \theta'-\frac{1}{2}\hat{\theta}^\top P \hat{\theta}\nonumber\\
    &=&\hat{\theta}^\top(P+\hat{\lambda} I)(\hat{\theta}-\theta')+\frac{1}{2}\theta'^\top P \theta'-\frac{1}{2}\hat{\theta}^\top P \hat{\theta}\nonumber\\
    &=&\hat{\lambda}\|\hat{\theta}\|^2-\hat{\lambda}\hat{\theta}^\top \theta'-
    \hat{\theta}^\top P\theta'+\frac{1}{2}\theta'^\top P \theta'+\frac{1}{2}\hat{\theta}^\top P \hat{\theta}\nonumber\\
    &=&\frac{1}{2}\hat{\lambda}\|\hat{\theta}\|^2+\frac{1}{2}\hat{\lambda}\|\theta'\|^2-\hat{\lambda}\hat{\theta}^\top \theta'-
    \hat{\theta}^\top P\theta'+\frac{1}{2}\theta'^\top P \theta'+\frac{1}{2}\hat{\theta}^\top P \hat{\theta}\nonumber\\
    &=&\frac{1}{2}\hat{\lambda} (\theta'-\hat{\theta})^\top (\theta'-\hat{\theta})+\frac{1}{2}(\theta'-\hat{\theta})^\top P (\theta'-\hat{\theta})\nonumber\\
    &=&\frac{1}{2}w^\top (P+\hat{\lambda} I)w\nonumber
\end{eqnarray}
Therefore, $P+\hat{\lambda} I\succeq 0$ always holds. Let $P$ has an eigendecomposition $P=U^\top \Lambda U$, where $\Lambda=diag(\lambda_1,\lambda_2,...,\lambda_n)$. Then, according to \ref{first}, we have
\[Ug+(\Lambda+\hat{\lambda} I)U\hat{\theta}=\textbf{0},\]
where $\hat{\lambda}=\frac{M}{2}\|\hat{\theta}\|$ and $\lambda_i+\hat{\lambda}\geq 0$, $i=1,2,...,n$. 

On the other hand, according to the first-order optimality condition for the global solution $\bar{\theta}^*$, we have 
\[g+(P+\bar{\lambda}^* I)\bar{\theta}^*=\textbf{0},\]
which is equivalent to 
\[Ug+(\Lambda+\bar{\lambda}^* I)U\bar{\theta}^*=\textbf{0},\]
where $\bar{\lambda}^*=\frac{M}{2}\|\bar{\theta}^*\|\geq \frac{M}{2}(1-\alpha)>\hat{\lambda}$. Therefore, we have 
$\lambda_i+\bar{\lambda}^*>\lambda_i+\hat{\lambda}\geq 0$, $i=1,2,...,n$. Thus, we can get the following equation
\[U\bar{\theta}^*=(\Lambda+\bar{\lambda}^* I)^{-1}(-Ug)=(\Lambda+\bar{\lambda}^* I)^{-1}(\Lambda+\hat{\lambda} I)U\hat{\theta},\]
which implies $|(U\bar{\theta}^*)_i|=|\frac{\lambda_i+\hat{\lambda}}{\lambda_i+\bar{\lambda}^*}(U\hat{\theta})_i|\leq |(U\hat{\theta})_i|$. Therefore, we have $\|U\bar{\theta}^*\|\leq \|U\hat{\theta}\|$. However, since $U$ is orthogonal,
\[\|U\bar{\theta}^*\|=\|\bar{\theta}^*\|\geq 1-\alpha>\|\hat{\theta}\|=\|U\hat{\theta}\|,\]
which causes a contradiction.
\end{proof}

\begin{lemma}
Let $\tilde{\theta}\in\arg\min_{\|\bar{\theta}\|\leq 1-\alpha} g^\top \bar{\theta}+\frac{1}{2}\bar{\theta}^\top P\bar{\theta}$ and $\bar{\theta}^*\in\min_{\bar{\theta}} m(\bar{\theta})$. If $\|\bar{\theta}^*\|> 1-\alpha$, then 
\[\tilde{\theta} \in\argmin_{\|\bar{\theta}\|\leq 1-\alpha} m(\bar{\theta})\]

\end{lemma}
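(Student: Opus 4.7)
The plan is to split the argument into two parts: first establish that the constrained quadratic minimizer $\tilde{\theta}$ must lie on the boundary $\|\tilde{\theta}\| = 1-\alpha$, and second show that on this boundary the cubic objective $m$ and the quadratic objective $q(\bar{\theta}) := g^\top\bar{\theta} + \frac{1}{2}\bar{\theta}^\top P\bar{\theta}$ differ only by a constant, so that minimizers of $q$ on the boundary automatically minimize $m$ on the boundary and, via Lemma~\ref{sub}, on the whole ball.

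For the boundary claim I would argue by contradiction. Suppose $\|\tilde{\theta}\| < 1-\alpha$. Then $\tilde{\theta}$ is an interior local minimizer of $q$ over the ball, so the first- and second-order necessary conditions force $P \succeq 0$ and $P\tilde{\theta} = -g$. The unconstrained cubic minimizer $\bar{\theta}^*$ satisfies the stationarity condition $(P + c I)\bar{\theta}^* = -g$ with $c := \frac{M}{2}\|\bar{\theta}^*\| > 0$. Substituting $-g = P\tilde{\theta}$ yields $\bar{\theta}^* = (P + c I)^{-1} P\tilde{\theta}$. Using the spectral decomposition $P = U\Lambda U^\top$ with $\Lambda = \mathrm{diag}(\lambda_1,\dots,\lambda_n)$ and $\lambda_i \geq 0$, each coordinate of $U^\top \bar{\theta}^*$ equals $\lambda_i/(\lambda_i + c) \in [0,1]$ times the corresponding coordinate of $U^\top \tilde{\theta}$. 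Hence $\|\bar{\theta}^*\| \leq \|\tilde{\theta}\| < 1 - \alpha$, contradicting the hypothesis $\|\bar{\theta}^*\| > 1 - \alpha$. Therefore $\|\tilde{\theta}\| = 1-\alpha$.

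With $\|\tilde{\theta}\| = 1-\alpha$ in hand, I finish as follows. On the sphere $\|\bar{\theta}\| = 1-\alpha$ the cubic term $\frac{M}{6}\|\bar{\theta}\|^3 = \frac{M}{6}(1-\alpha)^3$ is constant, so $m$ and $q$ agree up to that constant. Since $\tilde{\theta}$ minimizes $q$ over the ball and lies on the sphere, it also minimizes $q$, and hence $m$, over the sphere. By Lemma~\ref{sub} the constrained cubic problem $\min_{\|\bar{\theta}\|\leq 1-\alpha} m(\bar{\theta})$ attains its minimum on the sphere, so the minimum of $m$ over the sphere coincides with the minimum over the full ball, and $\tilde{\theta}$ realizes it.

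The main obstacle is the monotonicity step. The computation above is cleanest when $P \succ 0$, but requires care when $P$ is only positive semidefinite: one must verify that the factor $\lambda_i/(\lambda_i + c)$ is well defined and belongs to $[0,1]$ for every eigenvalue, including $\lambda_i = 0$, where it equals zero. It is also prudent to peel off the degenerate case $g = 0$ separately, since then $\bar{\theta}^* = 0$ and the hypothesis $\|\bar{\theta}^*\| > 1-\alpha$ fails at once. Once these details are handled, the shrinkage inequality $\|\bar{\theta}^*\| \leq \|\tilde{\theta}\|$ is the only non-routine ingredient.
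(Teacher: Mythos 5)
Your proof is correct, but it takes a genuinely different and somewhat longer route than the paper's. The paper argues by direct contradiction without ever locating $\tilde{\theta}$: if some $\hat{\theta}$ were strictly better than $\tilde{\theta}$ for $m$ over the ball, Lemma~\ref{sub} gives $\|\hat{\theta}\| = 1-\alpha \geq \|\tilde{\theta}\|$, so $\frac{M}{6}\|\hat{\theta}\|^3 \geq \frac{M}{6}\|\tilde{\theta}\|^3$; subtracting the cubic terms from $m(\tilde{\theta}) > m(\hat{\theta})$ immediately yields $g^\top\tilde{\theta}+\frac{1}{2}\tilde{\theta}^\top P\tilde{\theta} > g^\top\hat{\theta}+\frac{1}{2}\hat{\theta}^\top P\hat{\theta}$, contradicting the optimality of $\tilde{\theta}$ for the quadratic problem. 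That is the entire argument: it uses only Lemma~\ref{sub} as a black box and the monotonicity of $t \mapsto t^3$. You instead first establish the stronger fact $\|\tilde{\theta}\| = 1-\alpha$ via the interior optimality conditions $P\tilde{\theta} = -g$, $P\succeq 0$ and the stationarity condition for the unconstrained cubic minimizer, obtaining the shrinkage bound $\|\bar{\theta}^*\| \leq \|\tilde{\theta}\|$ through the eigendecomposition; this is essentially the same spectral argument the paper already deploys inside its proof of Lemma~\ref{sub}, so you are re-deriving machinery that lemma packages for you. Your second step (on the sphere $m$ and $q$ differ by a constant, and by Lemma~\ref{sub} the constrained cubic problem attains its minimum there) is clean and closes the argument correctly. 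What your route buys is the explicit extra conclusion that the quadratic trust-region minimizer sits on the boundary under the hypothesis, which is not needed for the lemma but does explain why one may pass to a boundary-constrained trust-region subproblem algorithmically; what it costs is duplicated work and the obligation to handle the degenerate cases ($\lambda_i = 0$, $g=0$) that the paper's comparison argument never has to touch.
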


\begin{proof}
    Suppose $\tilde{\theta}\notin \argmin_{\|\bar{\theta}\|\leq 1-\alpha} m(\bar{\theta})$. Let $\hat{\theta}\in \argmin_{\|\bar{\theta}\|\leq 1-\alpha} m(\bar{\theta})$. Then we have $m(\tilde{\theta})>m(\hat{\theta})$, which is
    \[g^\top \tilde{\theta}+\frac{1}{2}\tilde{\theta}^\top P\tilde{\theta}+\frac{M}{6}\|\tilde{\theta}\|^3>g^\top \hat{\theta}+\frac{1}{2}\hat{\theta}^\top P\hat{\theta}+\frac{M}{6}\|\hat{\theta}\|^3\]
    According to Lemma \ref{sub}, we have $\|\hat{\theta}\|=1-\alpha\geq \|\tilde{\theta}\|$. Therefore,
    \[g^\top \tilde{\theta}+\frac{1}{2}\tilde{\theta}^\top P\tilde{\theta}>g^\top \hat{\theta}+\frac{1}{2}\hat{\theta}^\top P\hat{\theta}+\frac{M}{6}(\|\hat{\theta}\|^3-\|\tilde{\theta}\|^3)\geq g^\top \hat{\theta}+\frac{1}{2}\hat{\theta}^\top P\hat{\theta},\]
    which contradicts with the fact $\tilde{\theta}\in\arg\min_{\|\bar{\theta}\|\leq 1-\alpha} g^\top \bar{\theta}+\frac{1}{2}\bar{\theta}^\top P\bar{\theta}$.
\end{proof}

Therefore, when solving the subproblem, we first solve the problem $\min_{\bar{\theta}}m(\bar{\theta})$ by the method introduced in Section 6.1 of \cite{cartis2011adaptive}. If $\|\bar{\theta}^*\|\leq 1-\alpha$, we find the solution $\bar{\theta}^*$. If not, we know the optimal solution is on the boundary, namely $\|\hat{\theta}\|= 1-\alpha$. Thus, we can skip the cubic term by adding the ball constraints. Only the following trust-region subproblem needs to be solved
\[\hat{\theta}=\arg\min_{\|\bar{\theta}\|\leq 1-\alpha} g^\top \bar{\theta}+\frac{1}{2}\bar{\theta}^\top P\bar{\theta},\]
which is discussed in  7.3 and 1.3 of \cite{conn2000trust}.

\subsection{A First-Order Version of CRNAS}\label{first-complexity}

Here we discuss a first-order version of CRNAS to solve \eqref{main:problem}. 
The construction of our first-order method is essentially the same as that for CRNAS with the exception a quadratic approximation of the function is utilized rather than a cubic approximation. 
Hence, the subproblem solved at each iteration of our first-order affine scaling method (FOAS) is
    \begin{eqnarray}
        \theta^{k+1}&=&\arg \min _{\theta:A\theta=b,\|\theta-\theta^k\|_{\theta^k}\leq 1-\alpha} \left(\left\langle\nabla L\left(\theta^k\right), \theta-\theta^k\right\rangle +\frac{M}{2}\left\|\theta-\theta^k\right\|_{\theta^k}^2\right),\nonumber
    \end{eqnarray}
where $M$ is a positive number. 
The exact description of the algorithm is provided below; you will recognize it as identical to CRNAS modulo the altered subproblem. 

\vspace{0.10in}
\shadowbox{\begin{minipage}{6.4in}
{\bf First-Order Affine Scaling}\label{alg1}

\begin{description}
\item[Step 0:] Provide an interior point $\theta^0$, i.e., $A\theta^0=b$ and $\theta^0\in \mbox{\rm int}(\mathcal{K})$; choose the constants $\eta >0$, $M >0$, and $\alpha \in (0,1)$; set $k=0$

\item[Step 1:] For $k=0,1,...,K-1$, solve the following subproblem:
\begin{eqnarray}
        \theta^{k+1}&=&\arg \min _{\theta:A\theta=b,\|\theta-\theta^k\|_{\theta^k}\leq 1-\alpha} \left(\left\langle\nabla L\left(\theta^k\right), \theta-\theta^k\right\rangle +\frac{M}{2}\left\|\theta-\theta^k\right\|_{\theta^k}^2\right)\nonumber
    \end{eqnarray}

\item[Step 2:] If $\|\theta^{k+1}-\theta^k\|_{\theta^k}<\eta$, let $K=k+1$ and stop. Otherwise, go back to Step 1 with $k=k+1$
\end{description}
\end{minipage}}
\vspace{0.01in}

As CRNAS was well-defined, so is FOAS due to Lemma \ref{lemma:feas}, 
which guarantees all of the generated iterates lie inside the interior of the cone. 
So, we proceed to prove the convergence rate of FOAS. 
To begin, we assume the Lipschitz smoothness of the gradient.

\begin{assumption}\label{lip1} 
There exists a constant $\beta \geq 0$ such that 
for all $x, y \in \mbox{\rm int}(\mathcal{K})$ 
\[
\|\nabla L(y)-\nabla L(x)\|_x^*\leq \beta \|y-x\|_x.
\]
\end{assumption}

%
Using this assumption it follows there exists a local quadratic upper bound for $L$ at all points inside the interior of the cone. 

\begin{lemma}\label{taylor1}
Under Assumption \ref{lip1}, the following inequality holds for all $x, y \in \mbox{\rm int}(\mathcal{K})$
\[
L(y) \leq L(x)+\nabla L(x)^\top(y-x)+\frac{\beta}{2}  \|y-x\|_x^2.
\]
\end{lemma}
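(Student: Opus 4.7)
The plan is to mimic the classical descent-lemma derivation from Euclidean analysis, but carried out in the local norm $\|\cdot\|_x$ induced by the barrier. Since Assumption \ref{lip1} bounds gradient differences in the dual local norm $\|\cdot\|_x^*$, and the inequality we want to prove is expressed in $\|\cdot\|_x$, the natural pairing is Cauchy--Schwarz between the two, combined with an integration along the segment from $x$ to $y$.

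The concrete steps I would carry out are: first, parametrize the segment by $\phi(t):=L(x+t(y-x))$ for $t\in[0,1]$ and use the fundamental theorem of calculus to write
\[
L(y)-L(x)-\nabla L(x)^\top(y-x)
=\int_{0}^{1}\bigl[\nabla L(x+t(y-x))-\nabla L(x)\bigr]^\top(y-x)\,dt.
\]
Second, apply the generalized Cauchy--Schwarz inequality $|u^\top v|\le \|u\|_x^*\,\|v\|_x$ inside the integrand to get
\[
\bigl|\bigl[\nabla L(x+t(y-x))-\nabla L(x)\bigr]^\top(y-x)\bigr|
\le \|\nabla L(x+t(y-x))-\nabla L(x)\|_x^*\cdot\|y-x\|_x.
\]
Third, invoke Assumption \ref{lip1} with the point $x+t(y-x)$ in place of $y$, which yields $\|\nabla L(x+t(y-x))-\nabla L(x)\|_x^*\le \beta\,\|t(y-x)\|_x=\beta t\,\|y-x\|_x$. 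Fourth, integrate $\int_0^1 \beta t\,\|y-x\|_x^2\,dt=\tfrac{\beta}{2}\|y-x\|_x^2$, and rearrange to obtain the claim.

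The only delicate point I anticipate is justifying the use of Assumption \ref{lip1} along the whole segment, since the hypothesis is stated only for $x,y\in\mathrm{int}(\mathcal{K})$. This is handled by the convexity of $\mathcal{K}$: because $\mathcal{K}$ is a convex cone, $\mathrm{int}(\mathcal{K})$ is convex, so $x+t(y-x)\in\mathrm{int}(\mathcal{K})$ for every $t\in[0,1]$ whenever $x,y\in\mathrm{int}(\mathcal{K})$, and the Lipschitz bound applies pointwise along the path. The local norm $\|\cdot\|_x$ is fixed at the base point $x$ and does not change with $t$, which is precisely why the integration collapses cleanly to the $\tfrac{\beta}{2}\|y-x\|_x^2$ term; no self-concordance estimates for comparing $\|\cdot\|_{x+t(y-x)}$ with $\|\cdot\|_x$ are needed, which keeps the argument short and parallel to the Euclidean descent lemma.
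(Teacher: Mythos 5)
Your proof is correct and follows essentially the same route as the paper's: the fundamental theorem of calculus along the segment, the generalized Cauchy--Schwarz pairing of $\|\cdot\|_x^*$ with $\|\cdot\|_x$, Assumption \ref{lip1} applied at $x+t(y-x)$, and integration of $\beta t$ to get the factor $\tfrac{\beta}{2}$ (the paper merely applies Cauchy--Schwarz to the whole integral before pushing the norm inside, a cosmetic difference). Your remark that the fixed base-point norm avoids any self-concordance comparison, and that convexity of $\mathrm{int}(\mathcal{K})$ keeps the segment feasible, is a correct and worthwhile clarification.
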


\begin{proof}
According to the fundamental theorem of calculus and norm relations we have
    \begin{eqnarray}
        L(y) - L(x)-\nabla L(x)^\top(y-x)
        &= & \left\langle\int_{0}^{1} (\nabla L(x+t(y-x))-\nabla L(x)) dt,y-x\right\rangle\nonumber\\
        &\leq&\left\|\int_{0}^{1} (\nabla L(x+t(y-x))-\nabla L(x)) dt\right\|_{x}^*\|y-x\|_x\nonumber\\
        &\leq&\left(\int_{0}^{1}\left\|(\nabla L(x+t(y-x))-\nabla L(x)) \right\|_{x}^*dt\right)\|y-x\|_x\nonumber\\
        &\leq&\left(\int_{0}^{1} \beta\left\|t(y-x)\right\|_{x}dt\right)\|y-x\|_x\nonumber\\
        &=& \frac{\beta}{2}  \|y-x\|_x^2\nonumber
    \end{eqnarray}
    where the last inequality follows from Assumption \ref{lip1}.
\end{proof}

Lemma~\ref{taylor1} demonstrates that, provided $M$ is large enough, FOAS at each iteration minimizes a local quadratic upper bound of our objective function. 
We show the function value decreases can be 
lowered bounded by the norm of $\nabla L(\theta^{k+1})$ projected in the linear space in the following two lemmas.

\begin{lemma}\label{decrease1}
    If $M\geq 2\beta$, then for all the iterates generated by FOAS we have that
    \[L(\theta^k)-L(\theta^{k+1})\geq \frac{M}{4}\|\theta^{k+1}-\theta^k\|_{\theta^k}^2.\]
\end{lemma}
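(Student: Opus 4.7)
The plan is to mimic the structure of the proof of Lemma~\ref{decrease}, but replace the third-order Taylor argument with a first-order Taylor argument supported by Lemma~\ref{taylor1}. Two ingredients will drive the bound: the optimality of $\theta^{k+1}$ for the FOAS subproblem and the quadratic upper bound on $L$ provided by Assumption~\ref{lip1}.

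First I would exploit the fact that $\theta^k$ itself is feasible for the subproblem at iteration $k$: the equality constraint $A\theta^k=b$ holds, and $\|\theta^k-\theta^k\|_{\theta^k}=0\leq 1-\alpha$. Evaluating the subproblem objective at $\theta^k$ yields $0$, so the optimality of $\theta^{k+1}$ forces
\[
\langle \nabla L(\theta^k),\theta^{k+1}-\theta^k\rangle + \frac{M}{2}\|\theta^{k+1}-\theta^k\|_{\theta^k}^2 \;\leq\; 0,
\]
which rearranges to an upper bound on the linear term, namely $\langle \nabla L(\theta^k),\theta^{k+1}-\theta^k\rangle \leq -\frac{M}{2}\|\theta^{k+1}-\theta^k\|_{\theta^k}^2$.

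Next I would apply Lemma~\ref{taylor1} with $x=\theta^k$ and $y=\theta^{k+1}$ (both in $\mbox{\rm int}(\mathcal{K})$ by Lemma~\ref{lemma:feas}) to obtain
\[
L(\theta^{k+1}) \;\leq\; L(\theta^k) + \langle\nabla L(\theta^k),\theta^{k+1}-\theta^k\rangle + \frac{\beta}{2}\|\theta^{k+1}-\theta^k\|_{\theta^k}^2.
\]
Substituting the bound from the previous step gives $L(\theta^{k+1}) \leq L(\theta^k) - \frac{M-\beta}{2}\|\theta^{k+1}-\theta^k\|_{\theta^k}^2$. Finally, invoking the hypothesis $M\geq 2\beta$, so that $M-\beta \geq M/2$, produces the claimed descent inequality $L(\theta^k)-L(\theta^{k+1}) \geq \frac{M}{4}\|\theta^{k+1}-\theta^k\|_{\theta^k}^2$.

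There is no real obstacle here; the argument is a routine ``majorization'' descent lemma analogous to the cubic case, with the quadratic majorant from Lemma~\ref{taylor1} playing the role that the cubic majorant played in Lemma~\ref{decrease}. The only minor bookkeeping point is verifying that Lemma~\ref{taylor1} is genuinely applicable at every iteration, which is immediate from Lemma~\ref{lemma:feas} since the subproblem constraint $\|\theta-\theta^k\|_{\theta^k}\leq 1-\alpha < 1$ keeps every iterate in $\mbox{\rm int}(\mathcal{K})$.
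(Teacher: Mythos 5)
Your proposal is correct and follows essentially the same route as the paper: compare the subproblem objective at the feasible point $\theta^k$ (value $0$) with its value at the minimizer $\theta^{k+1}$, then apply the quadratic majorization of Lemma~\ref{taylor1} and use $M\geq 2\beta$ to absorb the $\tfrac{\beta}{2}$ term into $\tfrac{M}{4}$. The only difference is cosmetic (you isolate the bound on the linear term before substituting, while the paper chains the two inequalities directly), so there is nothing further to add.
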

\begin{proof}
Since $\theta^{k+1}$ is the optimal solution of the FOAS subproblem, we have
    \begin{eqnarray}
    L(\theta^k)&\geq &L(\theta^k)+\left\langle \nabla L(\theta^k), \theta^{k+1}-\theta^k\right\rangle+\frac{M}{2}\|\theta^{k+1}-\theta^k\|_{\theta^k}^2\nonumber\\
    &\geq &L(\theta^{k+1})+\frac{M}{4}\|\theta^{k+1}-\theta^k\|_{\theta^k}^2,\nonumber
    \end{eqnarray}
where the last step follows from Lemma \ref{taylor1}.
\end{proof}

Because of the linear constraint $A\theta=b$, we expect $(\nabla L(\theta^{k}))^\top d$ to converge to zero after affine scaling, where $d$ is any given feasible direction.

\begin{lemma} \label{lemma11}
    Assume the constraint $\|P\theta-\theta^k\|_{\theta^k}\leq 1-\alpha$ in the subproblem is inactive, namely $\|\theta^{k+1}-\theta^k\|_{\theta^k}< 1-\alpha$. Then, for any $d$ such as $Ad=0$ we have
    \[
    |\left(\nabla L(\theta^k)\right)^\top d| \leq M\|\theta^{k+1}-\theta^k\|_{\theta^k}\|d\|_{\theta^k}.
    \]
\end{lemma}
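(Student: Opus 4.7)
The plan is to mirror the proof of Lemma~\ref{lemma1}, with the simplification that the objective in the FOAS subproblem is purely quadratic rather than cubic. The key observation is that inactivity of the Dikin ball constraint $\|\theta-\theta^k\|_{\theta^k} \leq 1-\alpha$ promotes every vector $d$ with $Ad=0$ to a genuine feasible direction at $\theta^{k+1}$, so the first-order optimality condition must hold as an equality in every such direction.

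First I would compute the gradient of the subproblem objective at $\theta^{k+1}$, namely $\nabla L(\theta^k) + M\nabla^2 B(\theta^k)(\theta^{k+1}-\theta^k)$, and write down the resulting KKT condition: for every $d$ with $Ad=0$,
\[
\left[\nabla L(\theta^k) + M\nabla^2 B(\theta^k)(\theta^{k+1}-\theta^k)\right]^\top d = 0.
\]
Rearranging gives $\nabla L(\theta^k)^\top d = -M(\theta^{k+1}-\theta^k)^\top \nabla^2 B(\theta^k) d$. Applying Cauchy--Schwarz in the inner product $\langle u,v\rangle_{\theta^k} := u^\top \nabla^2 B(\theta^k) v$ (which is an honest inner product since $\nabla^2 B(\theta^k)$ is positive definite on $\mbox{\rm int}(\mathcal{K})$ by self-concordance) then produces $|(\theta^{k+1}-\theta^k)^\top \nabla^2 B(\theta^k) d| \leq \|\theta^{k+1}-\theta^k\|_{\theta^k}\|d\|_{\theta^k}$, and combining the two estimates yields the claim.

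There is no real obstacle here; the argument is essentially one line of KKT plus one line of Cauchy--Schwarz, and it is structurally identical to Lemma~\ref{lemma1}, the only difference being that without a cubic term in the subproblem the resulting bound scales linearly in $\|\theta^{k+1}-\theta^k\|_{\theta^k}$ rather than quadratically. The one thing I would be careful about is the hypothesis statement, which refers to $\|P\theta-\theta^k\|_{\theta^k}$; I would interpret this as a typo for the ball constraint $\|\theta-\theta^k\|_{\theta^k} \leq 1-\alpha$ actually used in the FOAS subproblem, since that is the constraint whose inactivity is needed to certify that every $d\in\mbox{\rm Null}(A)$ is a feasible direction.
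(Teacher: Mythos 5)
Your proposal is correct and follows exactly the paper's argument: inactivity of the Dikin ball makes every $d\in\mbox{\rm Null}(A)$ a feasible direction, the first-order optimality condition of the quadratic subproblem gives $\left(\nabla L(\theta^k)+M\nabla^2 B(\theta^k)(\theta^{k+1}-\theta^k)\right)^\top d=0$, and Cauchy--Schwarz in the local inner product yields the bound. Your reading of $\|P\theta-\theta^k\|_{\theta^k}$ as a typo for the ball constraint is also the right one.
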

\begin{proof}
Because the constraint $\|\theta-\theta^k\|_{\theta^k}\leq 1-\alpha$ is inactive, any $d$ satisfying $Ad=0$ must be a feasible direction. According to the optimality condition, we have
\[\left(\nabla L(\theta^{k})+M(\nabla^2 B(\theta^k))(\theta^{k+1}-\theta^k)\right)^\top d=0,\]
which implies
\[
|\left(\nabla L(\theta^k)\right)^\top d| = M|(\theta^{k+1}-\theta^k)^\top (\nabla^2 B(\theta^k))d|\leq M\|\theta^{k+1}-\theta^k\|_{\theta^k}\|d\|_{\theta^k}.
\]
\end{proof}

Given the fact
$\sum_{k=0}^{K-1} (L(\theta^k)-L(\theta^{k+1}))=L(\theta_0)-L(\theta^K)\leq L(\theta_0)-L^*$, there exists a $0\leq k\leq K-1$, such that $L(\theta^k)-L(\theta^{k+1})\leq \frac{L(\theta_0)-L^*}{K} $.

Therefore, with Lemma \ref{decrease1}, we obtain
\[\frac{L(\theta^0)-L^{*}}{K}\geq L(\theta^k)-L(\theta^{k+1})\geq \frac{M}{4}\|\theta^{k+1}-\theta^k\|_{\theta^k}^2,\]
which implies

\[
\|\theta^{k+1}-\theta^k\|_{\theta^k}^2\leq \frac{4(L(\theta^0)-L^{*})}{KM}.
\]

Therefore, as long as $K> \frac{4(L(\theta^0)-L^{*})}{M(1-\alpha)^2}$ and $M\geq 2\beta$, according to Lemma \ref{lemma11}, we have the following bounds.

For any $d$, such as $Ad=0$, we have
\[|(\nabla L(\theta^{k}))^\top  d|\leq M\|\theta^{k+1}-\theta^k\|_{\theta^k}\|d\|_{\theta^k}\leq
2(L(\theta^0)-L^{*})^{1/2}M^{1/2}K^{-1/2}\|d\|_{\theta^k}. \]

We arrive at the following theorem.
\begin{theorem}
    Use First-Order Affine Scaling Algorithm \ref{alg1} and assume Assumption \ref{lip1} and $M>2\beta$. For any $K> \frac{4(L(\theta^0)-L^{*})}{M(1-\alpha)^2}$, there is a $0\leq k\leq K-1$, such that for any $d$ with $Ad=0$, we have
    \[|(\nabla L(\theta^{k}))^\top  d|\leq 2(L(\theta^0)-L^{*})^{1/2}M^{1/2}K^{-1/2}\|d\|_{\theta^k}.\]
\end{theorem}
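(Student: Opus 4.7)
The plan is to combine the per-iteration descent guarantee from Lemma \ref{decrease1} with a pigeonhole/averaging argument, and then invoke Lemma \ref{lemma11} at a carefully selected iterate. First I would telescope the descent inequality
\[
L(\theta^k)-L(\theta^{k+1}) \geq \frac{M}{4}\|\theta^{k+1}-\theta^k\|_{\theta^k}^2
\]
over $k=0,\ldots,K-1$. Using $L(\theta^K)\geq L^*$, this produces
\[
\sum_{k=0}^{K-1}\|\theta^{k+1}-\theta^k\|_{\theta^k}^2 \;\leq\; \frac{4(L(\theta^0)-L^*)}{M}.
\]

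Next I would pick $\hat k \in \{0,\ldots,K-1\}$ achieving the minimum of the summands. By averaging,
\[
\|\theta^{\hat k+1}-\theta^{\hat k}\|_{\theta^{\hat k}}^2 \;\leq\; \frac{4(L(\theta^0)-L^*)}{MK}.
\]
The assumption $K > \tfrac{4(L(\theta^0)-L^*)}{M(1-\alpha)^2}$ is precisely what forces the right-hand side to be strictly less than $(1-\alpha)^2$, so $\|\theta^{\hat k+1}-\theta^{\hat k}\|_{\theta^{\hat k}} < 1-\alpha$. This means the Dikin-ellipsoid constraint is inactive at iteration $\hat k$, which is the hypothesis needed to apply Lemma \ref{lemma11}.

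Finally, invoking Lemma \ref{lemma11} at $\hat k$ yields
\[
|(\nabla L(\theta^{\hat k}))^\top d| \;\leq\; M\|\theta^{\hat k+1}-\theta^{\hat k}\|_{\theta^{\hat k}}\|d\|_{\theta^{\hat k}}
\]
for every $d\in\ker A$. Substituting the averaging bound and simplifying gives the claimed rate $2(L(\theta^0)-L^*)^{1/2} M^{1/2} K^{-1/2} \|d\|_{\theta^{\hat k}}$.

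The only subtlety I expect is the bookkeeping around the inactive-constraint hypothesis. Lemma \ref{lemma11} only yields a useful first-order condition when the trust-region constraint is strictly satisfied at the iterate in question, because otherwise the optimality condition of the subproblem picks up an extra Lagrange multiplier for the norm constraint that is not controlled by $M$ alone. The averaging argument addresses this precisely: the lower bound imposed on $K$ is calibrated so that the smallest step length in the telescoped sum is automatically below $1-\alpha$, guaranteeing both that Lemma \ref{lemma11} is applicable and that the step-size bound feeds directly into the desired estimate. No additional smoothness beyond Assumption \ref{lip1} or curvature machinery is required.
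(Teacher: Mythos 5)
Your proposal is correct and follows essentially the same route as the paper: telescope the descent bound of Lemma \ref{decrease1}, select an iterate by pigeonhole so that its step length is at most $\bigl(4(L(\theta^0)-L^*)/(MK)\bigr)^{1/2}$, verify via the hypothesis on $K$ that the trust-region constraint is inactive there, and conclude with Lemma \ref{lemma11}. The only cosmetic difference is that you pigeonhole directly on the step lengths $\|\theta^{k+1}-\theta^k\|_{\theta^k}^2$ while the paper pigeonholes on the function decreases $L(\theta^k)-L(\theta^{k+1})$ and then converts via Lemma \ref{decrease1}; you also state explicitly the inactivity check that the paper leaves implicit.
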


\subsection{Deterministic Drug-Affected Cell Proliferation Experiment Details}\label{appx: deterministic initialization}

To set up our optimization problems, we collected \textit{in silico} data according to a true parameter set $\theta_{PP}^{*}(S)$ and the deterministic framework at collections of time points $\mathcal{T}$ and drug dose levels $\mathcal{D}$. 
Without further specification, we employ the following time points and drug dose levels: 
\begin{equation}
    \label{eq: Time and dose levels}
    \begin{split}
        \mathcal{T} &= \{0,3,6,9,12,15,18,21,24,27,30,33,36\}\\
        \mathcal{D} &= \{0,0.0313,0.0625,0.125,0.25,0.375,0.5,1.25,2.5,3.75,5\}.
    \end{split}
\end{equation}
Since we assume no noise in these experiments, we only collect data for one replicate. 
We selected the true parameter set $\theta_{PP}^{*}(S)$ randomly from a biologically feasible range, denoted as $\Theta^*(S)$. 
This range is specified for each experiment in the subsequent sections. 
With the true parameter set, we obtained the cell count data of each sub-type and the following total cell count data according to \eqref{eq:subpopulation dynamic}.

Based on the data, we solved \eqref{eq: PhenoPop Optimization} to obtain the point estimation $\hat{\theta}_{PP}(S)$ within the optimization feasible range $\hat{\Theta}(S)$. 
The optimization feasible range is larger than the biologically feasible region, indicating incomplete prior knowledge about the `true nature' of the parameters.\\


\noindent\underline{Experiment with $S = 1$}\\

The biologically feasible range and optimization feasible range for this experiment are described in Table \ref{tab: Deterministic range S1}. Note that the true EC50 parameter $E$ should be located within the drug dose levels $\mathcal{D}$ to ensure parameter identifiability.
\begin{table}[ht]
    \centering
    \begin{tabular}{|c|c|c|c|c|}
    \hline
      & $\alpha$  & $b$ & $E$ & $n$ \\
    \hline
      $\Theta^*(1)$ & $(0,0.1)$  & $(0.8,1)$ & $(0.05,0.1)$ & $(1.5,5)$\\
       \hline
      $\hat{\Theta}(1)$ & $(0,1)$ & $(0,1)$ & $(0,\infty)$ & $(0,\infty)$\\
      \hline
    \end{tabular}
    \caption{Biologically feasible range and optimization feasible range for the deterministic framework experiment with $S = 1$}
    \label{tab: Deterministic range S1}
\end{table}

\noindent\underline{Experiment with $S = 2$}\\

When $S = 2$, we denote the subpopulation with smaller EC50 value as `sensitive' and the subpopulation with a higher EC50 value as `resistant'. 
The sensitive subpopulation EC50 parameter $E_s$ is selected to be distinct from $E_r$ of the resistant subpopulation for model identifiability. 
Other than the EC50 values, we do not distinguish the parameter space between the sensitive and resistant subpopulations. 
We also note the initial proportions $p_s$ and $p_r$ are selected to satisfy $p_s + p_r = 1$. 
Table \ref{tab: Deterministic range S2} outlines the biologically feasible range and the feasible region used in the optimization model for this experiment.\\

\begin{table}[ht]
    \centering
    \begin{tabular}{|c|c|c|c|c|c|c|}
    \hline
      & $p$ & $\alpha$  & $b$ & $E_s$ & $E_r$ & $n$  \\
    \hline
     $\Theta^*(2)$ &  $(0,1)$ &$(0,0.1)$  & $(0.8,1)$ & $(0.05,0.1)$ & $(0.5,2.5)$ & $(1.5,5)$  \\
    \hline
     $\hat{\Theta}(2)$ &  $(0,1)$ &$(0,1)$  & $(0,1)$ & $(0,\infty)$ & $(0,\infty)$ & $(0,\infty)$  \\
     \hline
    \end{tabular}
    \caption{Biologically feasible range and optimization feasible range for the deterministic framework experiment with $S = 2$}
    \label{tab: Deterministic range S2}
\end{table}

\noindent\underline{Experiment with $S > 2$}\\

In this experiment, we dynamically selected the drug dose levels. 
Specifically, we selected the EC50 biologically feasible range and dose levels based on the number of subpopulations $S$. 
We kept the overall range of $\mathcal{D}$ the same, i.e. $\min(\mathcal{D}) = 0, \max(\mathcal{D}) = 10$, 
and selected $4S$ dose levels within this range according to a logarithmic scale. 
For example, with $S=3$ the drug doses could be selected as
\begin{align*}
    \mathcal{D}(3) = \{0,0.01,0.02,0.0398,0.0794,0.1585,0.3162,0.631,1.2589,2.5119,5.0119,10\}.
\end{align*}

To preserve parameter identifiability, we designed the biologically feasible ranges for the EC50 parameters based on the generated dose levels. 
For instance, we have the following biologically feasible ranges for the experiment $S = 3$ described in Table \ref{tab: Deterministic range S3}, where $E_1,E_2,E_3$ are from distinct subpopulations and the ranges of the other parameters are the same across the subpopulations.

\begin{table}[ht]
    \centering
    \begin{tabular}{|c|c|c|c|c|c|c|c|}
    \hline
      & $p$ & $\alpha$  & $b$ & $E_1$ & $E_2$ & $E_3$ & $n$  \\
    \hline
     $\Theta^*(3)$ &  $(0,1)$ &$(0,0.1)$  & $(0.8,1)$ & $(0.005,0.0299)$ & $(0.119,0.4736)$ & $(1.8854,7.5059)$ & $(1.5,5)$  \\
    \hline
     $\hat{\Theta}(3)$ &  $(0,1)$ &$(0,1)$  & $(0,1)$ & $(0,\infty)$ & $(0,\infty)$ & $(0,\infty)$ & $(0,\infty)$  \\
     \hline
    \end{tabular}
    \caption{Biologically feasible range and optimization feasible range for the deterministic framework experiment with $S = 3$}
    \label{tab: Deterministic range S3}
\end{table}

\subsection{Stochastic Drug-Affected Cell Proliferation Experiment
Details}\label{appx: stochastic initialization}

Similar to the deterministic framework experiment, we randomly generated $100$ true parameter sets $\theta_{LBD}^{*}(S)$ from biologically feasible ranges and the corresponding simulated dataset. 
Different from the PhenoPop experiment, we assumed the stochastic linear birth-death process governed the tumor dynamic. 
Consequently, we employed the Gillespie algorithm \cite{gillespie1976general} to simulate the data according to a stochastic process. 
Due to the stochastic nature of the simulation, we collected 13 replicates under the same drug dose levels and time points as in \eqref{eq: Time and dose levels}. 
The biologically feasible ranges and the constraints for the optimization problems are presented in Table \ref{tab: Stochastic range S2}.

\begin{table}[ht]
    \centering
    \begin{tabular}{|c|c|c|c|c|c|c|c|}
    \hline
      &$p$ & $\beta$ & $\nu$  & $b$ & $E_s$ & $E_r$ & $n$  \\
    \hline
    $\Theta^*$ & $(0,1)$ &$(\nu,\nu+0.1)$ & $(0,1)$  & $(0.8,1)$ & $(0.05,0.1)$ & $(0.5,2.5)$ & $(1.5,5)$  \\
    \hline
    $\hat{\Theta}$ & $(0,1)$  &$(0,1)$ & $(0,1)$  & $(0,1)$ & $(0,\infty)$  & $(0,\infty)$ & $(0,\infty)$\\
    \hline
    \end{tabular}
    \caption{Biologically feasible range and optimization feasible range for the stochastic framework experiment with $S = 2$}
    \label{tab: Stochastic range S2}
\end{table}

\subsection{Heterogeneous Logistic Model Experiment Details}\label{appx: heterogeneous logistic model details}

Similar to the experiments in Section \ref{sec: Case study}, we examined the algorithms' performance for 100 independent experiments with distinct true parameter sets $\theta^*_{LG}(S)$. 
The true parameter sets were randomly selected from the biologically feasible range shown in Table \ref{tab: heterogeneous logistic S2}. 
With the true parameter set, we generated the experimental data at time points 
\[\mathcal{T} = \{0,1.111,2.222,3.333,4.444,5.555,6.666,7.777,8.888,10\},\]
according to the relation described in \eqref{eq: heterogeneous logistic model}. 
The optimization problems were then formulated using the feasible regions described in Table \ref{tab: heterogeneous logistic S2} as in the prior experiments.

\begin{table}[ht]
    \centering
    \begin{tabular}{|c|c|c|c|c|c|c|}
    \hline
      &$p_1$ & $\alpha_1$ & $\beta_1$ & $p_2$ & $\alpha_2$ & $\beta_2$  \\
    \hline
    $\Theta^*$ & $(0,1)$ &$(0,1)$ &  $(0,1)$  & $1-p_1$ & $(2,3)$ & $(2,3)$  \\
    \hline
    $\hat{\Theta}$ & $(0,1)$  &$(0,10)$ & $(0,10)$  & $(0,1)$ & $(0,10)$  & $(0,10)$.\\
    \hline
    \end{tabular}
    \caption{Biologically feasible range and optimization feasible range for the heterogeneous logistic growth model with $S = 2$}
    \label{tab: heterogeneous logistic S2}
\end{table}

\subsection{CRNAS Stopping Criteria}\label{appx: stopping criteria}

To ensure the fairness of comparison in the computational time of each optimization algorithm, we employ two stopping criteria for CRNAS, which are also implemented in \textit{fmincon}. One is the first-order optimality condition, and the other is the stepsize of each iteration. In particular, the algorithm will stop once one of the following quantities falls below the given threshold $\epsilon = 10^{-6}$:
\[\begin{cases}
    \|\nabla f(\theta_{k+1}) \| & \text{ Euclidean norm of the gradient, }\\
    \| \theta_{k+1} - \theta_{k}\| & \text{ Euclidean norm of the difference in each iteration solution,}
\end{cases}
\]
where $f(\theta)$ is the objective function. It is worth noticing that \textit{fmincon} does not use this measurement for the first-order optimality condition. However, both methods of computing the first-order optimality conditions are zero at the minimum. Therefore, we use the same threshold for both CRNAS and the \textit{fmincon} algorithms. In addition to these two stopping criteria, the algorithm will also stop if the number of iterations is greater than $500$.

\bibliographystyle{alpha}
\bibliography{sample}

\end{document}